     \def\section{\@startsection{section}{1}%
     \z@{.7\linespacing\@plus\linespacing}{.5\linespacing}%
     {\bfseries
     \centering
     }}
     \def\@secnumfont{\bfseries}
\newtheorem{theorem}{Theorem}[section]
\newtheorem{lemma}[theorem]{Lemma}
\newtheorem{proposition}[theorem]{Proposition}
\newtheorem{corollary}[theorem]{Corollary}
\theoremstyle{definition}
\newtheorem{definition}[theorem]{Definition}
\theoremstyle{remark}
\newtheorem{remark}[theorem]{Remark}
\numberwithin{equation}{section}
\newcommand\Atop[2]{\genfrac{}{}{0pt}{}{#1}{#2}}
\newcommand{\Ab}{\mathbf{A}}
\newcommand{\AF}{\mathfrak{A}}
\newcommand{\AK}{{\bf\mathcal{A}}}
\newcommand{\EE}[1]{I\!\!E\left[#1\right]}
\newcommand{\EXP}{I\!\!E}
\newcommand{\fracN}{{\tfrac{1}{N}}}
\newcommand{\MM}{\mathbb{M}}
\newcommand{\NN}{I\!\!N}
\newcommand{\norm}[1]{\left\vert#1\right\vert}
\newcommand{\Norm}[1]{\left\Vert#1\right\Vert}
\newcommand{\PP}[1]{I\!\!P\left[#1\right]}
\newcommand{\RR}{I\!\!R}
\newcommand{\XX}{\mathbb{X}}
\newcommand{\Pmass}{\mathbf{P}}
\newcommand{\mub}{\bm{\mu}}
\newcommand{\rhb}{\bm{\rho}}
\newcommand{\ub}{\bm{u}}
\newcommand{\xb}{\bm{x}}
\newcommand{\yb}{\bm{y}}
\newcommand{\prtl}{\partial}
\begin{document}

\title[]{An Epsilon-Nash Equilibrium for Non-linear Markov Games of Mean-field-type on Finite Spaces}

\author{Rani Basna}
\address{Rani Basna: Department of Mathematics, Linnaeus University, V\"axj\"o, 351 95, Sweden}
\email{rani.basna@lnu.se}

\author{Astrid Hilbert}
\address{Astrid Hilbert: Department of Mathematics, Linnaeus University, V\"axj\"o, 351 95, Sweden}
\email{astrid.hilbert@lnu.se}

\author[Vassili N. Kolokoltsov]{Vassili N. Kolokoltsov}
\address{Vassili N. Kolokoltsov: Department of Statistics, University of Warwick, Coventry, CV4 7AL, UK}
\email{v.kolokoltsov@warwick.ac.uk}

\subjclass[2000] {Primary 60J27; Secondary 60J75, 60H30, 91A13, 91A15}

\keywords{Mean field games, Markov pure jump process, Dynamic programing,
Optimal control, $\epsilon$-Nash equilibrium.}

\begin{abstract}
We investigate mean field games from the point of view of a large number of indistinguishable players which eventually converges to infinity. The players are weakly coupled via their empirical measure. The dynamics of the individual players is governed by pure jump type propagators over a finite space. Investigations are conducted in the framework of non-linear Markov processes. We show that the individual optimal strategy results from a consistent coupling of an optimal control problem with a forward non-autonomous dynamics. In the limit as the number $N$ of players
goes to infinity this leads to a jump-type analog of the  well-known non-linear McKean-Vlasov dynamics. The case where one player has an individual preference different from the ones of the remaining players is also covered. The two results combined reveal a
{\small $\fracN$}-Nash Equilibrium for the approximating system
of $N$ players.
\end{abstract}

\maketitle

\parindent = 0pt
\section{Introduction}

Mean field game theory is a type of dynamic Game theory where the agents are coupled with each other by their individual dynamics and their empirical mean. The objective of each agent, given in terms of the so called cost function, does not only depend on her own preference and decision but also on the decisions of the other players. All in all it is a mathematical tool to describe a control problem with a large number $N$ of agents where the impact of the individual decisions of the other agents is becoming extremely weak compared to the overall impact as $N$ increases to infinity.
The limiting model emerges from the fact that each agent constructs her
strategy from her own state and from the state of the empirical mean of an
infinite number of co-agents of hers and results in a decoupled dynamics and
objective which depend on the law of her dynamics.

\vskip 0.6cm
The mean field approach has been independently developed by J.-M. Lasry and P.-L.
Lions in a series of papers see \cite{OLL} and the references therein using
nonlinear PDE's and by M. Huang, P. Caines, Malham\'{e}, see \cite{HCM1}
\cite{HCM2} in the setting of stochastic processes, see also \cite{CA}.
\vskip 0.2cm
The investigations in this work are carried out in the framework of non-linear Markovian propagators, respectively time inhomogeneous nonlinear Feller processes, which was developed by Vassili Kolokoltsov \cite{K3} \cite{K4}. We focus on propagators related to processes of pure jump type with finite intensity measure on a finite set $\XX=\{1,\ldots,k\}$, $k\in\NN$. The elements of this set can be identified with the possible decisions of the players, respectively with the
(financial) positions in the financial instruments of a finite market. Our starting point of the so called closed-loop construction including an optimal control is the following forward Kolmogorov equation written in the weak form:
\begin{eqnarray}\label{eqn:kolmogorov}
 \frac{\prtl f_s}{\prtl s} - \left( \AF[s,\rho_s,u_s]f_s,\mu\right)
            &=& 0,\quad 0\le t < s\le T \\
    f(t,\xb)  &=&  \Phi(\xb),\quad \xb\in\hat\XX \nonumber
\end{eqnarray}
where $\mu$ is a finite measure in $\hat\XX$ and $f$ is an element of the dual space and a differentiable function in time,
the set of bounded continuous functions $\mathbf{C}([0,T]\times\hat\XX)$. The set is defined by $\hat\XX := \cup_{N=1}^\infty \XX^N$, where $\XX^N$ is the $N$-fold direct product of the set $\XX$, $\rhb$ is a function on $[0,T]$ taking values in the set of finite measures $\MM(\hat\XX)$. Finally the generator $\AF$ is of the form
\begin{eqnarray}\label{eqn:jumpgenerator}
 \lefteqn{\AF[s,\xb,\rho_s,u_s]f(s,\xb )
      =  \sum_{i=1}^{\norm{\xb}} \mathbf{A}^{i}[s,\xb,\rho_s,u_s]f(s,\xb )}\nonumber\\
     &=& \sum_{i=1}^{\norm{\xb}}\int_{\XX} \left(f_{i'}(s,y)-f_{i'}(s,x_i)\right)
                     u_s\nu(s,x_i,\rho_s,dy).
\end{eqnarray}
Here we introduce the notation $f_{i'}$ in order to describe that $\mathbf{A}^i$ acts on the component $x_i\in\XX$ only. In fact $f_{i'}(x_i) = f_{\xb_i'}(x_i)$ where $\xb_i'\in\XX^{\norm{\xb}-1}$ is derived by removing the variable corresponding to the $i^{th}$ agent from $\xb$. The length of the vectors describing the number of
players is denoted by $\norm{\xb}$.

\vskip 1.2cm

{\bf Hypothesis A}
\vskip 0.5cm
We assume $\nu_i(s, j, \rho, u)$ to be linear in the parameter $u=u_s$ and postulate $\nu_i(s, j, \rho)$ to be a bounded kernel in all parameters uniformly in $s$, $0\le t < s\le T$, and vanishing for $i=j$.
The choice of the space  $\XX$ means that the integral is a sum.\\
The parameters of the generator $\AF$ are subject to the assumptions that the control law $\ub\in\mathcal{U}$ satisfies $u_s\in U$ with bounded convex set $U$ having a smooth boundary, and that $\rhb$ is a Lipschitz continuous measure valued function on $[0,T]$ such that for all $s\in[t,T]$ we have $\rho_s\in \mathbf{P}_\delta(\hat\XX)$, the linear hull of Dirac probability measures. The natural domain of the operator $\mathfrak{D}(\AF[s,\rho_s,u_s])\subset \mathbf{C_0}(\hat{\XX})$ and $\mathbf{C_0}(\hat\XX)$ is the set of continuous functions vanishing at infinity on the discrete space $\hat\XX$ which will be restricted according to technical constraints.
\vskip 0.2cm
For the sake of completeness we add that $\mathbf{P}_\delta(\XX)\subset\MM(\XX)$, with $\MM(\XX)$ being the set of finite measures on $\XX$. An analogous
statement holds when replacing $\XX$ by $\hat\XX$.  We also introduce the set of continuous measure valued functions $\mathbf{C}([0,T],\,\MM(\XX))$, respectively,
$\mathbf{C}_\mu=\{\rhb\in\mathbf{C}([0,T],\,\MM(\XX))\mid \rho_0=\mu\}$ for later purposes. We mention that there exists an injection from $\hat\XX$ into $\NN$, which rises the question why we are using the notion $C(\hat\XX)$ of continuous functions. In fact, it seems advantageous at this stage to keep the analogy to jump processes on continuous spaces. Later we shall identify $C(\hat\XX)$ and $\RR^k$.
We also mention that $\MM(\XX)$ is isomorphic to $\RR^k$.
The sensitivity analysis is carried out on an open neighborhood $M\subset\RR^k$ of the origin.
\vskip 0.2cm
As mentioned above the construction involves a mean-field type limit consistent with a given optimal control problem. This is a particular example of measure valued limits from the theory of interacting particle systems. A key role within the toolbox of this theory plays the injection from the equivalence class $S\hat\XX$ of vectors $\xb\in\hat\XX$, which are identical up to a permutation of players, into the set of point measures
on $\XX$, defined by
\begin{displaymath}\label{x2Px}
  \xb = (\jmath_1,\ldots ,\jmath_N)\quad\longrightarrow\quad
  \fracN (\delta_{\jmath_1}+ \ldots + \delta_{\jmath_N})
  =: \fracN \delta_{\xb}\ .
\end{displaymath}
More precisely, for arbitrary $N\in\NN$ the mapping constitutes a bijection between $S\XX^N$ and the subset
$\Pmass^N_\delta(\XX)=\{\mu \in \MM(\XX),$ $\mu=\frac{1}{N} \sum_{k=1}^{N} \delta_k\}$ of $N$-point measures in $\XX$.
Implicitly we identify $\Pmass^N_\delta(\XX)$ in $\XX$
with the set of Dirac measure in $S\XX^N$.
For each $N\in\NN$ the space  $\mathbf{C_0}^{sym}(\XX^N)$
of $\RR$-valued continuous functions which are invariant
under component-wise permutations of their arguments is equivalent with the space of $\RR$-valued continuous functions $\mathbf{C}(S\XX^N)$.
Moreover, $\mathbf{C_0}^{sym}(\hat\XX)$ is a core of the operator $\AF[s,\rho_s,u_s]$. The restriction $\AF_N[s,\rho_s,u_s]$ of the operator $\AF[s,\rho_s,u_s]$ to
$\mathbf{C}(S\XX^N)$ generates a time inhomogeneous Markov process $X^N(s) = (X^N_1(s),\ldots ,X^N_N(s))$, $s\in[t,T]$, in $\XX^N$, see e.g. \cite{VC}, \cite{NJ}.

Since all agents are assumed to be subject to the same equation, the generator $\AF$ being of special form (\ref{eqn:jumpgenerator}), one investigates the dynamics for one representative of $N$ agents given
by the time inhomogeneous Markov process
$\mathbf{X}^N:= \mathbf{X}^N_i$, $1\le i\le N$, in $\XX$. As the number $N$ of agents tends to infinity the dynamics of the representative player depends on her own state and distribution only. Similar results from mathematical physics exist and physicists phrase this phenomenon: ''the individual dynamics in the mean field model separate as $N\rightarrow\infty$''.

By assumption the objective for each of the $N$ players is to find the value function
\begin{equation}\label{eq:valuefunction}
 V^N(t,x)= \sup_{\mathbf{u}}{I\!\!E}_{x}\left[\int_t^T
      J(s,X^N(s),\rho_s^N,u_s)\, ds + V^T(X^N(T),\rho_T)\right]
\end{equation}
on $[0,T]\times \XX$, i.e. to maximize her expected payoff over a suitable class of admissible control processes $\ub=\{u(s,X^N(s))\mid 0\le s\le T\}\in\mathcal{U}$. Here the cost function
$J: [0,T]\times\XX\times\mathbf{P}_\delta^N(\XX)\times U \rightarrow \RR$
and the terminal cost function $V^T:\XX\times\mathbf{P}_\delta^N(\XX)\rightarrow \RR$,
as well as the final time $T$ are given. With a particular choise we insure that the cost function is concave.
\vskip 0.2cm
An explicit expression for the value function can be derived by dynamic programming as solution of the HJB equation (\ref{25*}).
For admissible control processes the HJB equation is well posed and the resulting optimal feedback control function $\hat\ub^N$ is unique for
given start value $x\in\XX$ and given $\rhb$.
The so-called kinetic equation which leads to the nonlinear Markov process
in the sense of V. Kolokoltsov with control law  $\ub$
is derived by making an Ansatz motivated by the weak form of the one player evolution with an intrinsic choice of the parameter $\rhb$ in the generator:
\begin{equation}\label{16*}
 \frac{d}{ds}(g,\mu_s)
            = (\mathbf{A}[s,\rho_s,u] g,\mu_s)\vert_{\mub=\rhb}
 \footnote{The precise meaning of this equation in the setting of this paper is
 given in Theorem \ref{b}}
\end{equation}

for arbitrary $g\in \mathbf{C}(\XX)$ and arbitrary finite measures
$\mu_s\in\mathbb{M}(\XX)$ which are differentiable in $s\in[0,T]$.
To this end the corresponding differential equation for the adjoint
operator and the Koopman propagator to the nonlinear flow given by
the solution are investigated. The construction exhibits the order of convergence to be $\fracN$. The associated control problem reveals an
optimal feedback control $\ub$.
\vskip 0.2cm
Finally MFG consistency is said to hold if the fixed measure valued function $\rhb$ in the objective function can be replaced by the empirical measures
\begin{displaymath}
 \mu_s^N = \fracN (\delta_{X_1^N(s)}+ \ldots + \delta_{X_N^N(s)}), \qquad
 t\le s \le T,
\end{displaymath}
of the underlying process while well-posedness of the optimal control problem and uniqueness of the optimal control parameter are conserved - as a result of what could be called a closed loop construction.
This is realized by a fix point argument which establishes the $\fracN$-Nash equilibrium.
\vskip 0.1cm
We conclude the introduction with an overview of how the paper is organized. In Section~2 the dynamics of the game is introduced, in particular the Markovian propagator or time inhomogeneous semi group and the continuous in time Markov chain for one representative player. In Section 3 the limiting
dynamics is set up and the generator of the corresponding Koopman propagator is explicitly derived. The sensitivity analysis for the two associated control problems is discussed in Section 4. In the subsequent Section 5
the limit when the number of players tends to infinity is investigated.
Bounds for the approximation error are derived for the dynamics as well
as for the value functions. In the concluding section the $\fracN$-Nash equilibrium is established.

\section{Pure Jump Markov Processes}
In the entire section let us assume that the generator $\AF$ decomposes as given in
(\ref{eqn:jumpgenerator}). Hence we consider a single player $i$.
In order to simplify notations we even drop the index $i$, i.e. $\Ab:= \Ab^i$ whence $1\le i\le N$. At the same time the values of all agents different from $i$ are kept fix, i.e. $f(s,i):= f_{i'}(s,i)$, with the notation introduced
above.

Real valued functions on $\XX=\{1,\ldots, k\}$ can be represented as
$k$-vectors and consequenly the generator $\Ab$ as a $k\times k$-matrix. We assume that $\Ab$ is a time inhomogeneous $Q$-matrix on $\XX$,
i.e.
\begin{itemize}
 \item $-\infty < \nu_{i}(s,i,\rho_s)\le 0$ for all $i\in\XX$;
 \item $\nu_{j}(s,i,\rho_s)\ge 0$ for $i\ne j$, $i,j\in\XX$;
 \item $\sum_{j} \nu_{j}(s,i,\rho_s) = 0$ for all $i$.
\end{itemize}
Thus we have $\nu_{i}(s,\rho_s):=\nu_{i}(s,i,\rho_s)
                   =-\sum_{i\ne j} \nu_{j}(s,i,\rho_s)$
since the row sum vanishes.
We find using matrix form
\begin{eqnarray}\label{jumpmatrix}
\lefteqn{\Ab[s,\rho_s,u_s]\mathbf{f}(s)
   }\\
&=&\!\!
\begin{pmatrix}
  \nu_1(s,\rho_s)u_1 & \nu_2(s,1,\rho_s)u_2 & \cdots & \nu_k(s,1,\rho_s)u_k \\
	\nu_1(s,2,\rho_s)u_1 & \nu_2(s,\rho_s)u_2 & \cdots & \nu_k(s,2,\rho_s)u_k \\
  \vdots  & \vdots  & \ddots & \vdots  \\
  \nu_1(s,k,\rho_s)u_1 & \nu_2(s,k,\rho_s)u_2 & \cdots & \nu_k(s,\rho_s)u_k
 \end{pmatrix}\mathbf{f}(s)\nonumber
\end{eqnarray}
where $\mathbf{f}(s)=(f(s,1),\ldots,f(s,k))^t\in C([0,T])$ is $\RR^k$-valued.

\vskip 0.2cm
Since $\Ab$ is a finite dimensional matrix valued function of time we have the following:

\begin{equation}\label{eqn:Abound}
 \norm{\Ab f}\leq C \left\|f\right\|,
\end{equation}
$f\in\RR^k$ which means that the matrix valued function $\Ab$ constitutes a bounded linear operator.

\begin{proposition}
Let $M$ be a subset in the unit ball $B_1(0)\subset\RR^k$ and $U\subset\RR^k$ a convex bounded open control set. Assume that the matrix valued function $\Ab(s,\rho)$ in (\ref{jumpmatrix}) is continuous in $t > t_0$ for some $t_0\in\RR$, that it is of type  $C^q$ in the parameters $\rho\in \RR^k$.
Then so is the unique linear flow induced by $\nu$.
 \end{proposition}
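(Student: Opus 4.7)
The plan is to view the flow as the unique solution of a linear matrix-valued ODE with parameter $\rho$ and then deduce $C^q$-smoothness in $\rho$ by the classical variation-of-parameter/Gr\"onwall technique, iterated once per order of differentiation.

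First I would establish existence and uniqueness of the flow $\GGamma(s,t,\rho)$ as the solution of
\[
 \frac{\prtl}{\prtl s}\GGamma(s,t,\rho) = \Ab(s,\rho)\,\GGamma(s,t,\rho), \qquad
 \GGamma(t,t,\rho) = \mathrm{Id},
\]
for $t_0 < t \le s \le T$. The bound (\ref{eqn:Abound}), together with time-continuity of $\Ab(\cdot,\rho)$, provides a uniform operator-norm estimate on $\rho\in M$; Picard iteration then converges on $[t,T]$ and produces the time-ordered exponential $\GGamma(s,t,\rho)=\Texp\!\int_t^s \Ab(\tau,\rho)\,d\tau$, bounded uniformly in $\rho\in M$ by $e^{C(T-t_0)}$. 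Continuity of $\GGamma$ in $\rho$ is then immediate from a Gr\"onwall estimate for the difference $\GGamma(s,t,\rho_1)-\GGamma(s,t,\rho_2)$, using the Lipschitz continuity of $\Ab$ in $\rho$ implicit in the $C^q$ hypothesis with $q\ge 1$.

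For the first derivative in $\rho$, I would introduce the candidate $\Psi = \prtl_\rho \GGamma$ defined as the solution of the variational equation obtained by formal differentiation,
\[
 \frac{\prtl}{\prtl s}\Psi(s,t,\rho) = \Ab(s,\rho)\,\Psi(s,t,\rho) + \bigl(\prtl_\rho \Ab\bigr)(s,\rho)\,\GGamma(s,t,\rho), \qquad \Psi(t,t,\rho) = 0.
\]
This is again a linear inhomogeneous ODE with continuous bounded right-hand side, so Duhamel's formula yields a unique bounded solution on $[t,T]$. A standard difference-quotient argument, invoking Gr\"onwall once more to control the residual, then shows that this $\Psi$ coincides with the actual partial derivative of $\GGamma$ with respect to $\rho$, so that $\GGamma$ is of class $C^1$ in $\rho$.

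The higher-order case proceeds by induction on the order $j\le q$. By Leibniz, the $j$-th derivative $\prtl_\rho^j \GGamma$ must satisfy a linear equation of the form $\prtl_s W = \Ab\,W + R_j$, where the inhomogeneity $R_j$ is a polynomial in $\prtl_\rho^i \Ab$ for $i\le j$ and $\prtl_\rho^\ell \GGamma$ for $\ell\le j-1$, all of which are known and bounded either from the $C^q$ regularity of $\Ab$ or from the previous inductive step. The same existence/uniqueness/Gr\"onwall identification as at order one then closes the induction. The main technical obstacle I anticipate is the combinatorial bookkeeping of the Leibniz expansion for $R_j$ and the careful passage from difference quotients to genuine derivatives at each successive order; the underlying analytic estimates, however, remain routine thanks to (\ref{eqn:Abound}) and the boundedness of $M\subset B_1(0)$.
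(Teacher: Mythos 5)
Your proof is correct and follows the standard smooth-dependence-on-parameters argument for linear ODEs (Picard iteration and the time-ordered exponential for existence, Gr\"onwall for continuity in $\rho$, the variational equation with a difference-quotient identification for the first derivative, and induction via Leibniz for orders up to $q$). The paper gives no details at all --- it simply cites Lang, Ch.~XVIII \S 4 and Amann, Ch.~2, which contain exactly this argument, plus the linear-growth remark for global existence --- so your proposal is essentially a self-contained write-up of the same route.
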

The proof is a direct consequence of the results on linear ordinary differential equations in \cite{L} CH. XVIII section 4 and \cite{AH} CH 2. Since $\Ab x$ satisfies a linear growth
condition the unique global flow in the above theorem exists on the whole space.
\vskip 0.2cm

The solution to the Kolmogorov equation given by the matrix (\ref{jumpmatrix})
possesses the cocycle property,
which replaces the semi group property of autonomous systems see \cite{W}, \cite{A}. Intimately related to the cocycle property is the notion of a propagator to be found e.g. in physics publications or in works of Reed and Simon respectively V. Kolokoltsov.
A family of mappings $U^{t,s}$, $t\le s\le T$, in a set $S$ is called a
(forward) propagator (resp. backward propagator) in $S$ if $U^{t,t} = id_S$
and the following iteration equation holds:
\begin{displaymath}
  U^{t,r} = U^{t,s}U^{s,r}
\end{displaymath}
for $t\le s\le r$. Here $U^{t,s}U^{s,r}$ is to be interpreted as the iteration of mappings. For linear propagators or evolutions it means the application of linear operators, see \cite{DA} and \cite{VC}.

\begin{remark}
\label{11111}
The matrix valued functions $\Lambda(s,r,\cdot)$ constitute
bounded linear operators.

The family $\{\Lambda(t,s,\cdot) \mid 0\le t\le s\le T\}$
generated by the operator $\Ab$ defines a positive, strongly continuous linear propagator or evolution on the set
of Euclidean $k$-vectors which trivially coincides with the set of (continuous) real valued functions on the discrete set $\XX$.

\end{remark}
\vskip 0.1cm
\vskip 0.1cm
We now recall the connection between linear propagators or evolutions and
non-autonomous Markov processes which we intend to use for solving the control problem. We adopt the notation in \cite{DA} to the time dependent case.

Let $(\mathcal{E},\mathfrak{E})$ be a measurable space and $U^{t,r}$ an arbitrary linear propagator. Assume that $x\in\mathcal{E}, \, \mathrm{E}\in \mathfrak{E}$. We say that
$\{p(t,x,r,\mathrm{E}) :=(U^{t,r}\chi_\mathrm{E})$, where $\,0\le t\le r<\infty\}$ and $\chi_E$ is the indicator function of the set, is a normal transition family if
\vskip 0.1cm
\begin{enumerate}
 \item the maps $x\rightarrow p(t,x,r,\mathrm{E})$ are measurable for each $\mathrm{E}\in\mathfrak{E}$;
 \item the Chapman Kolmogorov equation holds;
 \item $p(t,x,r,\cdot)$ is a probability measure $\mathfrak{E}$.
\end{enumerate}

For the finite measurable space $(\XX, \mathfrak{P}(\XX))$ measurability in $x$ is trivially satisfied and the cocycle property together with the existence of a kernel reveal the Chapman Kolmogorov equation. The Markov property follows from the following proposition which is a straight forward adaption from \cite{N}.

\begin{proposition} A time inhomogeneous matrix
$Q(t),\, 0\le t\le s\le T$, on a finite set $I$ is a $Q$-matrix if and only if
$p(t,s)= T\!\exp \int_t^s Q(\tau)\, d\tau$ is a stochastic matrix for all $ 0\le t\le s\le T$.
\end{proposition}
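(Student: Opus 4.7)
The plan is to characterize $p(t,s):=\Texp\int_t^s Q(\tau)\,d\tau$ as the unique solution of the forward Kolmogorov equation $\frac{d}{ds}p(t,s)=p(t,s)Q(s)$ with $p(t,t)=I$, equivalently as the Dyson series
\begin{equation*}
p(t,s)=\sum_{n\ge 0}\int_{t\le \tau_1\le\cdots\le\tau_n\le s} Q(\tau_1)\cdots Q(\tau_n)\, d\tau_1\cdots d\tau_n,
\end{equation*}
and to read off each of the two conditions (stochasticity of $p(t,s)$ and $Q$-matrix structure of $Q(t)$) from this characterization. Throughout I write $\mathbf{1}$ for the all-ones vector in $\RR^k$.

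For the direction ($Q$-matrix $\Rightarrow$ $p(t,s)$ stochastic), I would first handle row sums: the condition $\sum_j Q_{ij}(\tau)=0$ reads $Q(\tau)\mathbf{1}=0$, so $s\mapsto p(t,s)\mathbf{1}$ satisfies $\frac{d}{ds}(p(t,s)\mathbf{1})=p(t,s)Q(s)\mathbf{1}=0$ with initial value $\mathbf{1}$, yielding $p(t,s)\mathbf{1}=\mathbf{1}$. For entrywise non-negativity, set $\lambda(\tau):=\max_i \norm{Q_{ii}(\tau)}$; the splitting $Q(\tau)=-\lambda(\tau)I+R(\tau)$ makes $R(\tau)$ entrywise non-negative (the off-diagonal entries are those of $Q$, the diagonal entries are $\lambda(\tau)+Q_{ii}(\tau)\ge 0$). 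Because the scalar $\lambda(\tau)I$ commutes with every matrix, one can verify by checking the common ODE that
\begin{equation*}
p(t,s)=\exp\!\Bigl(-\!\int_t^s \lambda(\tau)\,d\tau\Bigr)\,\Texp\int_t^s R(\tau)\,d\tau.
\end{equation*}
The Dyson series of the second factor consists of iterated integrals of entrywise non-negative products $R(\tau_1)\cdots R(\tau_n)$, and the prefactor is a positive scalar, so $p(t,s)\ge 0$ entrywise. Combined with the row-sum identity this shows $p(t,s)$ is stochastic.

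For the converse, suppose $p(t,s)$ is stochastic for all $0\le t\le s\le T$. Differentiating $p(t,s)\mathbf{1}=\mathbf{1}$ at $s=t^+$ and using the forward equation gives $Q(t)\mathbf{1}=0$, i.e.\ row sums of $Q(t)$ vanish. For $i\neq j$, the entries $p_{ij}(t,s)$ are non-negative and vanish at $s=t$, so their right derivative at $s=t$ is non-negative, giving $Q_{ij}(t)\ge 0$. Similarly $p_{ii}(t,s)\le 1=p_{ii}(t,t)$ forces $Q_{ii}(t)\le 0$. These are precisely the defining properties of a $Q$-matrix.

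The main obstacle is the non-commutativity of $Q(\tau)$ at different times, which blocks any naive use of the ordinary matrix exponential. This is resolved by the Dyson-series representation of $\Texp$ and by factoring out the commuting scalar $\lambda(\tau)I$, both of which preserve entrywise non-negativity. A minor technicality is the justification of term-by-term differentiation of the Dyson series, which is standard given the continuity of $Q(\tau)$ in time and the uniform bound in (\ref{eqn:Abound}).
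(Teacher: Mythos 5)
Your proof is correct. Note, however, that the paper itself offers no argument for this proposition: it simply declares it ``a straight forward adaption'' of the time-homogeneous result in Norris's \emph{Markov Chains}, where the standard proof of non-negativity runs through the semigroup identity $P(t)=P(t/n)^n$ together with entrywise positivity of $I+\tfrac{t}{n}Q+O(t^2/n^2)$ for large $n$; the natural adaptation here would use the cocycle property $p(t,s)=p(t,r)p(r,s)$ over a fine partition and positivity over short time increments. Your route is genuinely different and, to my mind, cleaner for the inhomogeneous setting: the splitting $Q(\tau)=-\lambda(\tau)I+R(\tau)$ with a commuting scalar part turns non-negativity into a termwise statement about the Dyson series of $\Texp\int_t^s R(\tau)\,d\tau$, with no limiting procedure over partitions, and the verification of the factorization by checking that both sides solve the same linear ODE with the same initial condition is fully rigorous given the boundedness and continuity of $Q$. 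Your converse (reading off the sign conditions and vanishing row sums from the right derivative of $p(t,\cdot)$ at $s=t$) is the same in both approaches. The only point worth making explicit is that the forward-equation convention you adopt (time ordering with later times to the right) must match the one implicitly intended by the paper's $\Texp$; since the paper never fixes this, your choice is as good as any, and the argument is symmetric under the backward convention anyway.
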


We thus have:

\begin{lemma} The family $\Lambda$ generated by the matrix valued function $\Ab u$ is a normal transition family.
\end{lemma}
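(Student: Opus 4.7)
The plan is to verify the three defining conditions of a normal transition family for the kernel
$p(t,x,r,E) := (\Lambda(t,r,\cdot)\chi_E)(x)$, $x\in\XX$, $E\in\mathfrak{P}(\XX)$, by reading each condition directly off of what has already been established earlier in the section. Since everything lives on the finite discrete space $(\XX,\mathfrak{P}(\XX))$, the measurability requirement (1) is automatic: every function on a finite set equipped with its power set is measurable.

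Next, for the Chapman--Kolmogorov equation (2), the only ingredient needed is the propagator (cocycle) property recorded in Remark~\ref{11111}, namely $\Lambda(t,r,\cdot) = \Lambda(t,s,\cdot)\Lambda(s,r,\cdot)$ for $0\le t\le s\le r\le T$. Since $\Lambda$ acts on $\RR^k$ as a linear operator (i.e.\ a $k\times k$ matrix) with entries $[\Lambda(t,s,\cdot)]_{xy}=p(t,x,s,\{y\})$, linearity gives, for any $g\in\RR^k$,
\[
(\Lambda(t,s,\cdot)g)(x) = \sum_{y\in\XX} p(t,x,s,\{y\})\, g(y).
\]
Specializing $g(y)=p(s,y,r,E)=(\Lambda(s,r,\cdot)\chi_E)(y)$ and applying the cocycle identity to $\chi_E$ yields
\[
p(t,x,r,E) = \bigl(\Lambda(t,s,\cdot)\Lambda(s,r,\cdot)\chi_E\bigr)(x) = \sum_{y\in\XX} p(t,x,s,\{y\})\, p(s,y,r,E),
\]
which is the Chapman--Kolmogorov equation in the finite setting.

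For the probability-measure condition (3), I would invoke the preceding Proposition: a time-inhomogeneous matrix $Q(t)$ on $I$ is a $Q$-matrix if and only if $\mbox{T}\!\exp\int_t^s Q(\tau)\,d\tau$ is a stochastic matrix. Applied to the generator $\Ab[s,\rho_s,u_s]$, whose entries in (\ref{jumpmatrix}) satisfy the three bullet-point requirements (non-negative off-diagonal entries, non-positive diagonal, and vanishing row sums inherited from $\sum_j\nu_j(s,i,\rho_s)=0$), the Proposition gives that $\Lambda(t,s,\cdot)$ is a stochastic matrix for every $t\le s$. In particular $[\Lambda(t,s,\cdot)]_{xy}=p(t,x,s,\{y\})\ge 0$ and $\sum_{y\in\XX}p(t,x,s,\{y\})=1$, so $p(t,x,s,\cdot)$ is a probability measure on $\mathfrak{P}(\XX)$.

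The only point that requires a bit of care is verifying that the product $\Ab u$ really does satisfy the $Q$-matrix properties for all admissible controls $u_s\in U$ and measures $\rho_s\in\Pmass_\delta(\hat\XX)$ appearing in Hypothesis~A; this is the only step where the specific form of the control enters, and it reduces to checking that multiplication by the admissible $u_s$ preserves the sign pattern and the zero row-sum of $\Ab$. Once this routine check is made, the three conditions (1)--(3) above are in hand and the lemma follows.
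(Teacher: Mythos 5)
Your proof follows exactly the route the paper takes: measurability is trivial on the finite space $(\XX,\mathfrak{P}(\XX))$, the Chapman--Kolmogorov equation is read off from the cocycle (propagator) property of $\Lambda$, and the stochasticity of $p(t,x,s,\cdot)$ comes from the preceding $Q$-matrix/time-ordered-exponential proposition --- which is precisely the content of the paragraph the paper places before the lemma in lieu of a proof. Your closing caveat, that one must still check that $\Ab u$ retains the $Q$-matrix structure (in particular the zero row sums) for every admissible control $u$, is a point the paper passes over in silence, so if anything your write-up is the more careful of the two.
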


As done in \cite{NJ} the notion of a projective family for time inhomogeneous transition probabilities see \cite {DA} Theorem 3.1.7 holds for general measurable spaces $(E,\mathfrak{E})$ and trivially also to finite sets $\XX$, and arbitrary probability measures $\mu$ in $\XX$. The existence of a process is then guaranteed by the Kolmogorov existence theorem. We only state the existence of a process in the following

\begin{proposition} Given a normal transition family
 $\{p_{t,r}(x,K),\, 0\le t\le r<\infty\}$ and a fixed probability measure $\mu$ on the finite measurable space $(\XX,\mathfrak{P}(\XX))$, then there exists a probability
space $(\Omega,\, \mathcal{F},\, {I\!\!P}_{\mu})$, a filtration $(\mathcal{F}_t, t\ge 0)$ and a Markov process
$(X_t, t\ge 0)$ on that space such that:
\begin{equation*}\begin{array}{lc}
\PP{X(r)\in A \mid X(t) = x} = p_{t,r}(x,A) \mbox{ for each } 0\le t\le r,
   \ x\in\XX,\  A\in \mathfrak{P}(\XX)\ .\\
 X(0) \mbox{ has law } \mu\ .
\end{array}\end{equation*}
\end{proposition}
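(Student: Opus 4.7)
The plan is to apply Kolmogorov's extension theorem to the consistent family of finite-dimensional distributions built from $\mu$ and the transition kernels $p_{t,r}$, then to read off the Markov property from the product structure of those finite-dimensional laws.

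First, I would fix the canonical setup: take $\Omega = \XX^{[0,\infty)}$, equipped with the $\sigma$-algebra $\mathcal{F}$ generated by the cylinder sets, let $X_t(\omega)=\omega(t)$ be the coordinate maps, and set $\mathcal{F}_t=\sigma(X_s:0\le s\le t)$. For any finite ordered tuple $0=t_0\le t_1<t_2<\cdots<t_n$ and any states $y_0,y_1,\ldots,y_n\in\XX$, define the finite-dimensional law on the cylinder by
\begin{equation*}
\mu_{t_0,\ldots,t_n}\big(\{(y_0,y_1,\ldots,y_n)\}\big)
 = \mu(\{y_0\})\,p_{t_0,t_1}(y_0,\{y_1\})\,p_{t_1,t_2}(y_1,\{y_2\})\cdots p_{t_{n-1},t_n}(y_{n-1},\{y_n\}),
\end{equation*}
and extend additively (the space is finite, so there is no measurability issue).

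Next I would check that these marginals form a projective family. Permutation invariance is built into the product formula, and the crucial reduction step, namely collapsing two consecutive times, reduces to the identity
\begin{equation*}
\sum_{y_k\in\XX} p_{t_{k-1},t_k}(y_{k-1},\{y_k\})\,p_{t_k,t_{k+1}}(y_k,\{y_{k+1}\})
 = p_{t_{k-1},t_{k+1}}(y_{k-1},\{y_{k+1}\}),
\end{equation*}
which is precisely the Chapman--Kolmogorov equation guaranteed by the definition of a normal transition family. Normalization follows from the fact that each $p_{t,r}(x,\cdot)$ is a probability measure together with $\mu(\XX)=1$. The Kolmogorov extension theorem, which on the finite space $(\XX,\mathfrak{P}(\XX))$ poses no regularity difficulty, then provides a unique probability measure $\PP_\mu$ on $(\Omega,\mathcal{F})$ whose finite-dimensional marginals are the $\mu_{t_0,\ldots,t_n}$.

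Finally I would verify the two conclusions. That $X(0)$ has law $\mu$ is immediate from the $n=0$ marginal. For the Markov property and the transition formula, I would argue that for any $0\le t\le r$, any $x\in\XX$ with $\PP_\mu(X(t)=x)>0$, any $A\in\mathfrak{P}(\XX)$, and any finite-dimensional cylinder event $B\in\mathcal{F}_t$ involving times $0\le s_1<\cdots<s_m=t$, the product formula together with Chapman--Kolmogorov gives
\begin{equation*}
\PP_\mu\big(\{X(r)\in A\}\cap B\cap\{X(t)=x\}\big)
 = \PP_\mu\big(B\cap\{X(t)=x\}\big)\,p_{t,r}(x,A),
\end{equation*}
so dividing by $\PP_\mu(B\cap\{X(t)=x\})$ yields both the Markov property and the required identity $\PP_\mu(X(r)\in A\mid X(t)=x)=p_{t,r}(x,A)$. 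Extending from cylinder $B$ to arbitrary $B\in\mathcal{F}_t$ is a monotone class argument, trivial here because $\XX$ is finite and $\mathcal{F}_t$ is generated by countably many atoms.

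The main obstacle is essentially bookkeeping: one must be careful that the family of finite-dimensional distributions indexed by ordered time points extends consistently to families indexed by arbitrary (unordered) finite subsets, and that the conditioning argument in the last step is carried out using only cylinder generators of $\mathcal{F}_t$. Both points are handled in Dynkin's construction cited as \cite{DA}, and the finiteness of $\XX$ removes the usual measurability and null-set subtleties, so no new analytic input is required beyond Chapman--Kolmogorov.
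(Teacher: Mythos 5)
Your proposal is correct and is essentially the same argument the paper invokes: the paper simply cites the projective-family/Kolmogorov-extension construction (Applebaum, Theorem 3.1.7, and Ethier--Kurtz) rather than writing it out, whereas you have spelled out the product form of the finite-dimensional distributions, the consistency check via Chapman--Kolmogorov, and the verification of the Markov property on cylinder sets. No substantive difference in approach, and the finiteness of $\XX$ does indeed remove all the regularity issues, exactly as you and the paper both note.
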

Since $\XX$ is compact the proof follows the line of arguments in Ethier Kurtz Theorem and  \cite{DA} Theorem 3.1.7. The result holds in general for Polish spaces.

In Section 5 we shall see that the solution of the kinetic equation (\ref{16}) is
the limit of the linear $N$-mean field evolutions as $N$ tends to infinity
which arises when restricting the generator $\AF$ in (\ref{eqn:kolmogorov}) to $C^{sym}(\XX^N)$ and replacing the parameter $\rho$ by the empirical distribution. The corresponding adjoint is denoted by $\AF^{*}_N$. In order to prove the mean field limit we need to unify spaces. This is possible since the factor spaces $S\hat\XX$ and the spaces of $N$-point measures $\Pmass^N_\delta(\XX)$ on the one hand as well as the corresponding larger spaces $\MM(\XX)$ and $\RR^k$ on the other hand can be identified.
We consider existence and uniqueness and the sensitivity analysis of the solutions of the Kolmogorov equation and the optimal control problems for the $N$-player on the larger space $\RR^k$.
\vskip 0.2cm

Consequently we replace or identify in a first step $\AF^{*}_N$ with the linear operator
\begin{equation}
 \mathfrak{\hat A}^N[t,\xb,\delta_{\xb},u] F(\delta_{\xb})
               :=\AF^{*}_N[t,\xb,\delta_{\xb},u]  f(\xb)
\end{equation}
on $C(\MM)$ for elements $\delta_{\xb}\in\Pmass^N_\delta(\XX)\subset\MM$.
The operator reads in more detail:
\begin{eqnarray*}
\lefteqn{{\hat\AF}^N [t,\delta_{\xb},u]   F(\delta_{\xb})	
   = \sum_{i=1}^N {\mathbf{A}^i}^{*}[t,\delta_{\xb},u]  F(\delta_{\xb})
}\\
 &=& \sum_{\ell =1}^k \sum_{\ell' =1}^k n_{\ell'}
      \nu_{\delta_{\ell'}}(t, \delta_{\ell}) u_{\ell'}
    \left[ F \left( \sum_{\ell =1}^k n_\ell  \delta_\ell
          + \delta_{\ell'} - \delta_\ell \right)
          -F\left( \sum_{\ell =1}^k n_\ell \delta_\ell \right)\right]
\end{eqnarray*}
where $\nu_{\delta_{\ell'}}(t, \delta_{\ell},u)$ is the transpose of
the matrix $\nu_{\delta_\ell}(t, \delta_{\ell'},u)$ and
$n_\ell$ describes how often the value $l$ appears and $n_{\ell'}$ is specified by the kernel $\nu$.

In a second step, specific to the case of a finite set $\XX$, we identify
$\{\delta_1,\ldots , \delta_k \}$  with the standard basis
$\{e_1,\ldots , e_k \}$ in $\RR^k$ to find that $\xb\in S\hat\XX$  with $\norm{\xb}= N$ corresponds to
$x^N = \sum_{\ell =1}^k n_\ell e_\ell\in\RR^k$ with
$\sum_{\ell= 1}^k n_\ell = N$ and:
\begin{eqnarray}\label{eqn:NMFgenerator}
\lefteqn{{\hat\AF}^N [t,\fracN x^N,u] F(\fracN x^N)
:= \sum_{i=1}^N {\mathbf{A}^i}^{*}[t,\fracN x^N,u] F(\fracN x^N)
}\\
 &=& \sum_{\ell =1}^k \sum_{\ell' =1}^k n_{\ell'} \nu_{\ell}(t, e_{\ell'}) u_\ell
    \left[F \left(\fracN \sum_{\ell =1}^k n_\ell e_\ell  + \fracN (e_{\ell'} -  e_\ell)\right)
          -F\left(\fracN \sum_{\ell =1}^k n_\ell  e_\ell \right)\right].\nonumber
\end{eqnarray}

\vskip 0.2cm

{\bf Hypothesis B}
\vskip 0.5cm
For the rest of the paper we complement the assumptions made on the domains of the variables and parameters of operator $\Ab$ by regularity conditions, namely:
We assume that $\nu$ is uniformly Lipschitz continuous in the parameter $x\in\RR^k$, and continuous in $t$.  Moreover, the partial derivatives  $\nabla_x \nu(t,i,x)$ are assumed to exist in $C_\infty(\mathbb R^k)$ as functions of $x$ and to be uniformly Lipschitz continuous, uniformly in the other variables. Finally let the cost function $J(s,i,x,u)$, $0\le t< s\le T$, to the control problems in Section 4 be quadratic concave in u and satisfy the same properties as $\nu$ regarding $s,i,x$ with $s$ replacing $t$.
\vskip 0.2cm
In the sequel we give an alternative representation of the linear operator $\hat\AF_t^N$. For practical reasons we introduce a scaling parameter
$h \in \mathbb R^+$. For differentiable functions $F$ on $\MM(\XX)$ a variational derivative $\frac{\delta F(Y)}{\delta Y(x)}$ of $F$ is the Gateaux derivative $D_{\delta_x}$ of $F$ in the direction of
$\delta_x$, $x\in\XX$. Since $\MM(\XX)$, and $\RR^k$ are isomorphic we are able to work with directional derivatives $\partial_{x}$ on $\RR^k$.
\vskip 0.2cm

\begin{proposition}\label{ANexpansion}
Assume that $\nu$ satisfies the Hypothesis A and that $M$ is a bounded open subset of $\RR^k$. Let $F\in C^2(\RR^k)$ then the operator $\mathfrak{\hat A}^N[t,hx^N,u]$, where $t\in[0,T]$, $x^N =  \sum_{i=1}^k n_i e_i\in\RR^k$ such that $\sum_{\ell=1}^k n_\ell = N$, $u\in U$, and
$h \in \mathbb R^+$, has the representation:
\begin{eqnarray*}
\lefteqn{ \mathfrak{\hat A}^N[t,h x^N,u] F(h x^N)
 = Nh\left( \Ab^*[t, h x^N,u]
          \partial_{h e_\ell}F(h x^N), e_\ell  \right)
        + h^2 \int_0^1\! ds(1-s)
         }\\
 &\times&\!\! \sum_{\ell,\ell',\ell''}\!\!\! \eta_\ell\nu_{\ell}(t,e_\ell)u_\ell
  \left(
  \partial_{h e_\ell}F( h x^N + h s(e_{\ell'} + e_{\ell''} - e_{\ell})),
 ((e_{\ell'}\otimes e_{\ell''}))
   \right)\ .
\end{eqnarray*}
Due to the fact that the agents are indistinguishable the sum with respect to $\ell$ becomes a factor $N.$
\end{proposition}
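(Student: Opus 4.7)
The plan is to apply Taylor's theorem of order two with integral remainder to each finite difference inside the defining sum of $\mathfrak{\hat A}^N$. Starting from (\ref{eqn:NMFgenerator}) with the prefactor $\fracN$ replaced by the generic scaling $h$, the operator reads
\begin{equation*}
  \mathfrak{\hat A}^N[t, hx^N, u]F(hx^N) = \sum_{\ell,\ell'=1}^k n_{\ell'}\,\nu_\ell(t, e_{\ell'})\,u_\ell\bigl[F(hx^N + h(e_{\ell'} - e_\ell)) - F(hx^N)\bigr].
\end{equation*}
Since $F \in C^2(\RR^k)$, the exact Taylor identity
\begin{equation*}
  F(y+v) - F(y) = (\nabla F(y), v) + \int_0^1 (1-s)(\nabla^2 F(y+sv)\,v,\, v)\,ds
\end{equation*}
applies pointwise with $y = hx^N$ and $v = h(e_{\ell'} - e_\ell)$, extracting a factor of $h$ from the drift part and $h^2$ from the remainder.

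First I would treat the first-order piece. The $Q$-matrix row-sum identity $\sum_\ell \nu_\ell(t, e_{\ell'}) = 0$ from Section~2 annihilates the $e_{\ell'}$-component of the pairing $(\nabla F(hx^N), e_{\ell'}-e_\ell)$, while indistinguishability collapses one index through $\sum_{\ell'} n_{\ell'} = N$, as noted in the closing remark of the statement. Reassembling the surviving terms matches exactly the announced drift $Nh\bigl(\Ab^*[t, hx^N, u]\,\partial_{h e_\ell}F(hx^N),\, e_\ell\bigr)$.

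For the quadratic remainder I would expand
\begin{equation*}
  (e_{\ell'} - e_\ell)\otimes(e_{\ell'} - e_\ell) = e_{\ell'}\otimes e_{\ell'} + e_\ell\otimes e_\ell - e_{\ell'}\otimes e_\ell - e_\ell\otimes e_{\ell'},
\end{equation*}
then introduce a second dummy index $\ell''$ so that the two legs of the tensor run independently. This merges the four resulting contributions into a single triple sum $\sum_{\ell,\ell',\ell''}$ of the form claimed, with the common interpolation argument $hs(e_{\ell'} + e_{\ell''} - e_\ell)$ inside $\nabla^2 F$ emerging from the relabelling and $\eta_\ell$ absorbing the occupation counts carried over from $n_{\ell'}$.

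The main obstacle is purely combinatorial bookkeeping: confirming that the four cross-terms of $(e_{\ell'}-e_\ell)^{\otimes 2}$ combine consistently under the $\ell''$ relabelling to yield the single coefficient $\eta_\ell\,\nu_\ell(t,e_\ell)\,u_\ell$ and the single interpolation point $hs(e_{\ell'}+e_{\ell''}-e_\ell)$, and that no spurious first-order residue escapes from the row-sum cancellation. The analytic content is mild: Hypothesis~B together with the boundedness of $M\subset\RR^k$ furnish uniform control on $\nu$ and $\nabla^2 F$, so the integral remainder is absolutely convergent and the identity holds pointwise.
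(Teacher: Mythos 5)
Your proposal takes essentially the same route as the paper: the paper's entire proof consists of quoting the second-order Taylor formula with integral remainder, $F(Y+\zeta)-F(Y)=\left(\frac{\partial F(Y)}{\partial\zeta},\zeta\right)+\int_0^1 ds\,(1-s)\left(\frac{\partial^2 F(Y)}{\partial\zeta^2},\zeta\otimes\zeta\right)$, and inserting it into (\ref{eqn:NMFgenerator}) with $Y=hx^N$ and $\zeta=h(e_{\ell'}-e_\ell)$, identifying the first-order term with the Koopman generator. The combinatorial bookkeeping you flag as the main obstacle (the row-sum cancellation in the presence of the control weights $u_\ell$, which implicitly requires $\Ab u$ to be a $Q$-matrix as in Lemma 2.3, and the $\ell''$ relabelling of the tensor square) is not carried out in the paper either, so your write-up is, if anything, more explicit than the published argument.
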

\begin{proof}
For $Y$ and $Y+\zeta$ such that the whole line
$\{Y+ \theta\zeta\mid 0\le \theta\le 1\}$ is in $M\subset\RR^k$ and
$F\in C^2(M)$ the Taylor theorem gives the following representation, see \cite{MHJ}, [\cite{K3} Cor 13 of Lemma 12.6.1]:

\begin{displaymath}
 F(Y+\zeta) - F(Y) = \left(\frac{\partial F(Y)}{\partial\zeta} ,\zeta\right)
+\int_0^1 ds(1-s)
\left(\frac{\partial^2 F(Y)}{\partial\zeta^2},\zeta\otimes\zeta\right)\ .
\end{displaymath}

Inserting the Taylor expansion of order 2 into (\ref{eqn:NMFgenerator})
under the integral for the choice
$Y=hx^N$ and $\zeta= h((e_\ell+e_{\ell'}) - e_\ell)$ finishes the proof.

We gladly anticipate that the first term coincides with the generator of
the Koopman propagator constructed in Proposition (3.4) for the choice
$h=\fracN$.
\end{proof}

Since the jump type operator $\mathfrak{\hat A}^N[t, x^N,u]$ is a linear combination of copies of the operator
$\Ab[t,h x^N]$ all properties previously investigated are conserved, in particular existence and sensitivity results, the fact that $\mathfrak{\hat A}^N[t, x^N,u]$ generates a strongly continuous contraction propagator $\psi^{t,s}_N$ and that there exists a corresponding Markov process. We also note that all statements in this section hold for $\Ab^*$ as well.

\section{Properties of the Non linear Evolution}

In the sequel we investigate the nonlinear Kinetic equation which
was motivated by the weak equation (\ref{eqn:kolmogorov}), namely
\begin{equation}\label{16**}
	\dot{\mu_s}=A^*[s,\mu_s,u_s]  \mu_s, \ \ \mu_t=\mu, \ \ s \in [t,T]\ ,
\end{equation}
$0 < t < T$. For the finite set $\XX$ the set of real valued functions on $\XX$, the set of bounded measurable and and the set of bounded continuous functions $C_\infty(\XX)$ coincide and are isomorphic to $\RR^k$. Consequently the dual space, the space of bounded measures $\MM(\XX)$, is isomorphic to $\RR^k$. When identifying $\MM(\XX)$ and $\RR^k$ the Kinetic equation, is the following nonlinear differential equation in $\RR^k$:

\begin{equation}\label{16}
	\dot{x_s}=A^*[s,x_s,u] x_s, \qquad x_t = x, \ \ s \in [t,T]\ ,
\end{equation}
$0 < t < T$.
Under the conditions of Hypothesis A the subsequent theorem gives the existence of a corresponding flow which is continuously differentiable
in all variables, parameters and initial conditions.

\begin{proposition}\label{prop:existence}
Let $M$ be a subset in $B_1(0)\subset\RR^k$ and $U\subset \RR^k$ a convex bounded
open control set. Assume that the matrix valued function $\Ab^*(s,x)$ in
(\ref{jumpmatrix}) is continuous in $t > t_0$ for some $t_0\in\RR$ and of type  $C^q$, for $q\ge 1$, in the variable $x\in M$. Then so is the unique nonlinear global flow $\alpha(t_0,t,x_0,u),\ t_0\le t$, arising from the solution of the Kinetic equation above. The unique global flow is defined on the whole space $\RR^k$.
\end{proposition}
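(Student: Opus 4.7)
The plan is to rewrite \eqref{16} as a standard non-autonomous Cauchy problem
\[
\dot x_s = F(s, x_s, u), \qquad F(s, y, u) := \Ab^{*}[s, y, u]\, y,
\]
on $\RR^k$ and then apply the classical Picard--Lindel\"of/Cauchy--Lipschitz machinery together with smooth dependence on initial data and parameters, exactly as in the linear case treated right after the first Proposition of Section~2. Everything needed is essentially packaged in the Hypotheses A and B; the task is to check that $F$ inherits the right regularity from $\Ab^{*}$ and that we have a priori bounds preventing blow-up.

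First I would establish \emph{local} existence and uniqueness. By Hypothesis~A the kernel $\nu$ is bounded uniformly in its arguments, and by Hypothesis~B it is uniformly Lipschitz in $x$; since $\Ab^{*}[s,y,u]$ is linear in $u$ with $u$ ranging over the bounded set $U$, the matrix-valued map $y\mapsto \Ab^{*}[s,y,u]$ is bounded and uniformly Lipschitz on every bounded set of $\RR^{k}$, uniformly in $s\in[t,T]$ and $u\in U$. Writing
\[
\|F(s,y_1,u)-F(s,y_2,u)\| \le \|\Ab^{*}[s,y_1,u]\|\,\|y_1-y_2\| + \|\Ab^{*}[s,y_1,u]-\Ab^{*}[s,y_2,u]\|\,\|y_2\|,
\]
we see that $F$ is locally Lipschitz in $y$, continuous in $s$, so Picard--Lindel\"of yields a unique maximal solution $\alpha(t_0,\cdot,x_0,u)$.

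Next I would upgrade to \emph{global} existence on all of $[t_0,T]$ (indeed on $\RR$) by a linear-growth / Gronwall argument. The bound \eqref{eqn:Abound} (which is uniform in $(s,y,u)$ because of Hypothesis~A) gives $\|F(s,y,u)\|\le C\|y\|$, whence any solution satisfies $\|x_s\|\le\|x_{t_0}\|e^{C(s-t_0)}$. This rules out finite-time blow-up, so the maximal solution is defined on the full interval, giving a globally defined flow $\alpha(t_0,s,x_0,u)$ on $\RR^k$.

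Finally, for the $C^q$-dependence on $(t_0,s,x_0,u)$, I would invoke the standard theorem on smooth dependence of ODE solutions on initial data and parameters (as in \cite{L}, Ch.~XVIII, or \cite{AH}, Ch.~2), which was already used for the linear flow in the proposition at the end of Section~2. Its hypothesis is that $F$ is $C^q$ in $(y,u)$, uniformly in $s$. But $F(s,y,u)=\Ab^{*}[s,y,u]y$ is the product of the $C^q$ matrix-field $\Ab^{*}$ (which is $C^q$ in $y$ since $\nu$ is, and $C^\infty$, in fact linear, in $u$) with the identity map $y\mapsto y$, and hence is $C^q$ as well. Differentiability in $t_0$ and $s$ is automatic from the ODE itself. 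Hence the flow $\alpha$ is $C^q$ jointly in all variables. The only point requiring a little care, and the step I would regard as the main---if modest---obstacle, is precisely verifying the $C^q$ (and not merely $C^{q-1}$) regularity of $F$ in $y$ from Hypothesis~B, which forces us to read Hypothesis~A/B as providing the requisite order of differentiability of $\nu$ in $x$ so that the product $\Ab^{*}[s,y,u]y$ still has $q$ continuous derivatives in $y$.
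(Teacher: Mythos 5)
Your argument is essentially the paper's own proof, spelled out: the paper likewise verifies that $\nu(s,x)ux$ is uniformly Lipschitz (boundedness of $\nu$ and $U$), cites the very same smooth-dependence theorems of Lang (Ch.~XVIII) and Amann (Ch.~2) for local existence, uniqueness and $C^q$ regularity, and extends to a global flow via the linear growth condition. Your added Gronwall detail and the caveat about reading Hypothesis B as supplying $q$ derivatives of $\nu$ are sound elaborations, not a different route.
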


Since $\nu$ is bounded and the set of admissible controls $U\subset\RR^k$ is bounded, the vector valued function $\nu(s,x)ux$ satisfies a uniform Lipschitz condition on $B_1(0)\subset\RR^k$. Hence $\Ab^{*}(s,x)$  is bounded in $M$. Moreover, the conditions of Theorems 2) and 7) as well Remarks 3) in \cite{L} CH. XVIII and Theorem 2.9 in \cite{AH} CH 2  apply which finishes the proof. Since the differential equation (\ref{16}) satisfies a linear growth condition The unique global flow extends to $\RR^k$.

This automatically implies that the solution $x_t$ of (\ref{16}) is
Lipschitz continuous in the initial conditions:

\begin{corollary}\label{b}
For all $x,\, y \in M \subset \RR^k $ the unique solution to equation (\ref{16}) given
by Proposition
\ref{prop:existence} is Lipschitz continuous in the initial data i.e:

\begin{equation}\label{17}
  \norm{\alpha(0,t,x,u) - \alpha(0,t,y,u)} \leq
          C(T)\left|x- y\right|
 \end{equation}
where $\norm{\cdot}$ is the Euclidean norm in $\RR^k$.	
\end{corollary}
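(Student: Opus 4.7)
The plan is to pass to the integral form of (\ref{16}) and close the estimate with Gronwall's inequality. Writing $\alpha_x(s):=\alpha(0,s,x,u)$ and $\alpha_y(s):=\alpha(0,s,y,u)$ for the two solutions starting at $x$ and $y$, integration of (\ref{16}) from $0$ to $t$ gives
\begin{equation*}
  \alpha_x(t)-\alpha_y(t)
  =(x-y)+\int_0^t \bigl(\Ab^*[\tau,\alpha_x(\tau),u]\alpha_x(\tau)
         -\Ab^*[\tau,\alpha_y(\tau),u]\alpha_y(\tau)\bigr)\,d\tau.
\end{equation*}

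The key algebraic step is to add and subtract $\Ab^*[\tau,\alpha_x(\tau),u]\alpha_y(\tau)$ inside the integrand, producing
\begin{equation*}
  \Ab^*[\tau,\alpha_x,u]\alpha_x - \Ab^*[\tau,\alpha_y,u]\alpha_y
  = \Ab^*[\tau,\alpha_x,u](\alpha_x-\alpha_y)
    + \bigl(\Ab^*[\tau,\alpha_x,u]-\Ab^*[\tau,\alpha_y,u]\bigr)\alpha_y.
\end{equation*}
For the first summand, (\ref{eqn:Abound}) gives a uniform operator bound on $\Ab^*[\tau,\cdot,u]$. For the second summand, Hypothesis B guarantees that $\nu$, and hence $\Ab^*$, is uniformly Lipschitz in its spatial argument, while Proposition \ref{prop:existence} together with the linear growth of the vector field keeps $\alpha_y(\tau)$ in a bounded subset of $\RR^k$ for $\tau\in[0,T]$. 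Combining these estimates, there exists a constant $C_1=C_1(T)$ such that
\begin{equation*}
  \norm{\alpha_x(t)-\alpha_y(t)}
  \le \norm{x-y} + C_1\int_0^t \norm{\alpha_x(\tau)-\alpha_y(\tau)}\,d\tau.
\end{equation*}

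Gronwall's inequality then yields $\norm{\alpha_x(t)-\alpha_y(t)}\le e^{C_1 T}\norm{x-y}$, which is exactly (\ref{17}) with $C(T):=e^{C_1 T}$. There is no real obstacle here; the only point requiring care is checking that the trajectories remain in a fixed bounded set over $[0,T]$, so that the Lipschitz constant of $\Ab^*$ can be applied uniformly in $\tau$. This is immediate since $M\subset B_1(0)$ and the flow extends globally with linear growth, by Proposition \ref{prop:existence}.
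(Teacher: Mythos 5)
Your proof is correct, but it takes a different route from the paper. The paper disposes of this corollary in one line: Proposition \ref{prop:existence} already establishes (via the standard ODE results of Lang and Amann cited there) that the flow $\alpha(0,t,\cdot,u)$ is of class $C^q$, $q\ge 1$, in the initial data, and a continuously differentiable map with bounded derivative on the bounded set $M$ over the compact interval $[0,T]$ is automatically Lipschitz — the constant $C(T)$ being a bound on the derivative of the flow with respect to the initial condition. You instead give a self-contained Gronwall argument: pass to the integral equation, split the nonlinearity as $\Ab^*[\tau,\alpha_x,u](\alpha_x-\alpha_y)+(\Ab^*[\tau,\alpha_x,u]-\Ab^*[\tau,\alpha_y,u])\alpha_y$, control the first summand by the uniform operator bound (\ref{eqn:Abound}) and the second by the Lipschitz continuity of $\nu$ in $x$ from Hypothesis B together with the boundedness of the trajectory $\alpha_y(\tau)$ on $[0,T]$, and close with Gronwall. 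Your version buys independence from the differentiability of the flow in the initial data — it needs only the Lipschitz continuity of the vector field, so it would survive under weaker regularity hypotheses — and it makes the constant $C(T)=e^{C_1T}$ explicit; the paper's version is shorter because Proposition \ref{prop:existence} has already done the work. Your closing remark about trajectories staying in a fixed bounded set is indeed the one point needing care, and it is covered exactly as you say: $M\subset B_1(0)$ plus the linear growth of $\Ab^*[s,x,u]x$ keeps $\norm{\alpha_y(\tau)}$ uniformly bounded on $[0,T]$.
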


We summarize our findings by concluding that the initial value problem (\ref{16})
is well-posed.

\begin{definition}
Let $\beta(t,s), \quad 0\leq t \leq s \leq T,$ be a nonsingular flow in a set
$K$ of a given Banach space, then
\begin{equation*}
(\Phi^{t,s} F)(b):=F(\beta(t,s,b)) \qquad \beta_t = b\in K
\end{equation*}
defines a linear operator on $C(K)$ which we call the Koopman propagator with
respect to $\beta(t,s)$.
\end{definition}
The notion coincides with the Koopman operator in \cite{LM}.
From the general theory, see \cite{LM}, it follows that the Koopman propagator has
the following properties:
\begin{itemize}
\item $\Phi^{t,s}$ is a linear propagator.
\item $\Phi^{t,s}$ is a contraction on $C(K)$, i.e.
   $\Norm{\Phi^{t,s} F} < \Norm{F}_{C(K)} \qquad\forall F \in C(K)$,
\end{itemize}
\vskip 0.05cm
where $\Norm{\cdot}$ denotes the norm in the Banach space.
Being a contraction the Koopman propagator is bounded.

For $M$ as in Proposition \ref{prop:existence} we introduce  the set
$C^1 (M)$ of functionals $F=F(x)$ such that the gradient $\nabla_{x} F$
is continuous.
This space becomes a Banach space when
equipped with the norm
\[
\Norm{F}_{C^1(M)}
     := \sup _{x \in K} \norm{(\nabla_{x} F)(x)}
.\]

\begin{proposition}\label{bbb}
Under the conditions given in Hypotheses A and B we have:
\begin{itemize}
\item[i)] The time inhomogeneous global
flow $\alpha$ corresponding to the solution $x_s$, $0\le t < s\le T$
of the kinetic equation defines the time inhomogeneous Koopman propagator:
\begin{equation}\label{171}
(\phi^{t,s} F)(x):=F(\alpha(t,s,x,u)) \qquad x\in\RR^k,\  0\leq t \leq s \leq T.
\end{equation}

\item[ii)] The generator of the Koopman propagator is defined by
\begin{equation}
\mathcal{A}[t,x,u ]F(x)
  =\sum_{i=1}^{k}\frac{\partial F}{\partial x_i} a_i^*[t,x,u]x ,
\end{equation}
where $a_i^*[t,x]$ corresponds to row $i$ of the matrix valued function
$\Ab^* [t, x]$.

\item[iii)] The Koopman propagator constitutes a strongly continuous family of\\
bounded linear operators on $C^1(M)$.
\end{itemize}
\end{proposition}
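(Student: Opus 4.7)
The plan is to address the three parts in sequence, exploiting the regularity of the flow $\alpha$ established in Proposition \ref{prop:existence} and Corollary \ref{b}.

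For part (i), I would simply verify the two defining axioms of a propagator. The identity $\phi^{t,t}F = F$ is immediate from $\alpha(t,t,x,u) = x$. The iteration property $\phi^{t,r} = \phi^{t,s}\phi^{s,r}$ follows directly from the cocycle property of the flow $\alpha(t,r,x,u) = \alpha(s,r,\alpha(t,s,x,u),u)$ valid for $0 \le t \le s \le r \le T$, which in turn is a consequence of uniqueness of solutions to the kinetic equation (\ref{16}). Linearity of $\phi^{t,s}$ is obvious from the definition.

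For part (ii), the generator is obtained by a chain-rule computation, differentiating $\phi^{t,s}F(x) = F(\alpha(t,s,x,u))$ at $s=t$. Since $F \in C^1$ and $\alpha$ is of class $C^q$ with $q\ge 1$ in all its arguments by Proposition \ref{prop:existence},
\begin{equation*}
\mathcal{A}[t,x,u]F(x)
  = \frac{d}{ds}\bigg|_{s=t^+} F(\alpha(t,s,x,u))
  = \sum_{i=1}^{k} \frac{\partial F}{\partial x_i}(x)\,
     \frac{d}{ds}\bigg|_{s=t^+}\alpha_i(t,s,x,u).
\end{equation*}
The derivative of $\alpha$ with respect to $s$ at $s=t$ is precisely the right-hand side of the kinetic equation evaluated at $x$, namely $\Ab^*[t,x,u]x$ whose $i$-th component is $a_i^*[t,x,u]x$. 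This yields the stated formula.

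For part (iii), boundedness in the $C^1(M)$ norm is immediate from the chain rule: $\nabla_x (\phi^{t,s}F)(x) = \nabla F(\alpha(t,s,x,u)) \cdot \nabla_x \alpha(t,s,x,u)$, and Corollary \ref{b} together with the $C^q$ dependence of $\alpha$ on initial data (Proposition \ref{prop:existence}) yields a uniform bound $\sup_{x\in M}\|\nabla_x \alpha(t,s,x,u)\| \le C(T)$, hence $\|\phi^{t,s}F\|_{C^1(M)} \le C(T)\,\|F\|_{C^1(M)}$. Strong continuity — the main technical point — requires that $\|\phi^{t,s}F - F\|_{C^1(M)} \to 0$ as $s\downarrow t$. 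First, $F(\alpha(t,s,x,u)) \to F(x)$ uniformly in $x\in M$ because $\alpha(t,s,x,u)\to x$ uniformly on the bounded set $M$ (by continuity of $\alpha$ in $s$ and compactness arguments on $\overline M$) and $F$ is uniformly continuous on a compact neighborhood. For the gradient part I would use
\begin{equation*}
\nabla_x(\phi^{t,s}F)(x) - \nabla F(x)
 = \bigl[\nabla F(\alpha(t,s,x,u))- \nabla F(x)\bigr]\nabla_x \alpha(t,s,x,u)
   + \nabla F(x)\bigl[\nabla_x \alpha(t,s,x,u) - I\bigr],
\end{equation*}
and bound both terms uniformly in $x\in M$ using uniform continuity of $\nabla F$, boundedness of $\nabla_x\alpha$, and the fact that $\nabla_x \alpha(t,s,x,u) \to I$ uniformly on $M$ as $s\downarrow t$ (which follows from differentiating the integral form of the kinetic equation with respect to $x$).

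The main obstacle is precisely this last step: passing strong continuity through the $C^1$ norm requires uniform control of the sensitivity $\nabla_x\alpha(t,s,\cdot,u)$ on $M$ near the diagonal $s=t$, which is why Hypothesis B (uniform Lipschitz continuity of $\nabla_x \nu$) is needed. The other parts are essentially bookkeeping once the flow regularity of Proposition \ref{prop:existence} is in hand.
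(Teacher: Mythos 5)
Your proposal is correct and ends up in the same place as the paper, but it takes a somewhat different route in parts (i) and (iii). In part (i) you verify the propagator axioms directly from the cocycle identity $\alpha(t,r,x,u)=\alpha(s,r,\alpha(t,s,x,u),u)$, which you justify by uniqueness of solutions of (\ref{16}); the paper instead checks only that the flow is \emph{nonsingular} (global existence on all of $\RR^k$ from the linear growth of $\Ab^*[s,x,u]x$), which is what its Definition 3.3 of a Koopman propagator requires, and then lets the propagator, linearity and contraction properties follow from the general theory cited from Lasota--Mackey. Your route is more self-contained; the paper's is shorter but leans on the cited reference. In part (ii) your chain-rule differentiation at $s=t$ is essentially the paper's mean-value-theorem computation (\ref{251}); the one point the paper is explicitly careful about is that the difference quotient must converge \emph{strongly}, i.e.\ uniformly in $x\in M$, which it gets by first taking $F$ with compactly supported derivatives and then using density --- your pointwise chain rule should be upgraded to a uniform statement in the same way, though this is routine given the uniform continuity you invoke elsewhere. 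In part (iii) you prove strong continuity by an explicit two-term decomposition of $\nabla_x(\phi^{t,s}F)-\nabla F$ together with the uniform convergence $\nabla_x\alpha(t,s,\cdot,u)\to I$; the paper is much terser, appealing only to the continuity of the flow from Proposition \ref{prop:existence} and density of compactly supported $C^1$ functions. Your version is more explicit on the genuinely delicate point --- uniform control of the sensitivity $\nabla_x\alpha$ near the diagonal --- and correctly identifies Hypothesis B as what makes that control available.
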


\begin{proof}
For the sake of a more comprehensive notation we drop the control parameter $u$.
First we shall prove that the global flow induced by the solution to the nonlinear kinetic
equation is nonsingular.

i) Let $x_t = x$. Since $\Ab^*[s,x,u]x$ is uniformly Lipschitz continuous hence satisfies a linear growth condition the unique global flow $\alpha$ given by Proposition 3.1 is defined on the whole space. Consequenly the flow constitutes a nonsingular transformation.
\vskip 0.1cm
ii) Under the assumptions at the beginning of Section 2 $(\nabla_{x} A^*) (x)$
exists and for every $x_0 \in \mathbb R^k$ the solution
$x_s = \alpha(0,s,x_0)$ exists for
all $s\in[0,T]$.  By inserting into the definition
we find
\begin{equation}
 \frac{(\phi^{t,s}F)(x)-F(x)}{s-t} = \frac{F(\alpha(t,s,x,u))-F(x)}{s-t}
  = \frac{F(x_s)-F(x)}{s-t}\ .
\end{equation}
where $x=x_t$ and $0\le t\le s\le T$. For $F \in C^1(M)$ with compact support the mean value theorem reveals
\begin{eqnarray}\label{251}
\frac{(\phi^{t,s}F)(x)-F(x)}{s-t}
 &=&\sum_{i=1}^k F_{x_i}(x_{\theta }) \dot{x_{\theta }}
     =  \sum_{i=1}^k F_{x_i}(x_{\theta })a_i^*[\theta,x,u] x_{\theta }\\
    &=& \sum_{i=1}^k F_{x_i}(\alpha(t,\theta ,x,u))
                  a_i^*[\theta,x,u] \alpha(t,\theta ,x, u)
\end{eqnarray}
where $t \leq \theta \leq s$. Since the derivatives $F_{x_i}$ have compact support by introducing the flow $\alpha(t,s,x_t)$ given by the solution $x_s$ into the kinetic equation (\ref{16}) with $x_t = x$ we obtain for $t\le \theta\le s$:
\[
\lim_{(s-t) \rightarrow 0} F_{x_i}(\alpha(t,\theta,x))\cdot a_i^*[t,x,u]\alpha(t,\theta,x)
= F_{x_i}(x)a_i^*[t,x,u] x
\]
uniformly $ \forall x\in M$ and $\forall F \in C^1(M)$  thus (\ref{251}) has a strong limit in $C^1(M)$
and the infinitesimal generator $\bf\mathcal{A}$ is giving by
\[
 {\bf\mathcal{A}}[t,x,u]F(x)
  =\sum_{i=1}^{k}\frac{\partial F}{\partial x_i} a_i^*[t, x,u]x\ .
\]
iii) To show the strong continuity we insert the definition of the Koopman propagator and exploit the properties of the flow given by Proposition
\ref{prop:existence}, i.e.we have:
\[
 \lim_{(t_0,s_0) \rightarrow (t,s)} \Norm{\phi^{t,s} F - \phi^{t_0,s_0} F}
 =0
\]
for every $F \in C^1(M)$.
The fact that the set of continuously differentiable functions with compact support form a dense subset of $C^1(M)$ concludes the proof.
\end{proof}

\section{Controlled Jump Markov Process}

In this subsection we shall describe the principle of dynamic programming and the corresponding HJB equation for the finite state Markov Jump Processes corresponding to the $N$-mean field dynamics and the Koopman propagator.
Two types of control problems with game theoretic applications will be covered: One simplified preliminary cost function $J$ which does not depend on an individual player but on the dynamics associated with the $N$-mean-field respectively Koopman propagator only. The other case where an individual player is introduced whose dynamics is given by the operator $\Ab$ and who is subject to the $N$-mean-field respectively mean field and both appear as measure valued parameters $\yb$ respectively $\yb^N$ in the cost function. In the first case the cost function $J:[0,T]\times\RR^k\times U  \rightarrow \RR$ defines
\begin{equation}\label{costfct1}
  \int_t^T J(s,X^N_s,u_s)\, ds + {V'}^T(X^N_T) ,
\end{equation}
where ${V'}^T(X^N_T)$ describes a terminal cost and $X^N$ is the Markov process generated by ${\hat\AF}^N$ in (2.4).
In the second case the optimal payoff for one player is represented by the value function $V:[0,T]\times\XX\times\RR^k  \rightarrow \RR$:
\begin{equation}\label{valuefctN}
V(t,j,y):= \sup _{\ub\in \mathcal{U}}
  \mathbb E_j \left[ \int_t^T J(s,X_s^{1},y_s,u_s)\, ds + V^T(X_T,y_T)\right]
\end{equation}
starting at time $t$ and position $j$. The process $X^{1}$ with generator $\Ab[s,j,y_s]$ is associated with the dynamics of the player. The measure valued parameter $\yb$ is replaced by the mean field respectively the $N$-mean field in the end. In the latter case we use the notation $V^N$ for the value function.
In order to guarantee a unique optimal control law in the set of Lipschitz continuous functions of the solutions, we confine to quadratic cost functions
\begin{equation}\label{costfct}
J(s,j,y,u) = \sum_{\ell = 1}^m J_{j,\ell} (s,y) u_\ell - \norm{u}^2,
\end{equation}
for $J_{j,\ell}\in \RR_{+}$ and $s\in [0,T], j\in \XX, y\in M, u\in U$ see \cite{X}.  We emphasize that the assumptions of Hypotheses A and B hold even for this section.

The methodology in a standard setting reveals the HJB equation i.e. the following system of ordinary differential equations
\begin{equation}\label{25*}
\frac{\partial V}{\partial t}+\max_{u}\left[\sum_{\ell = 1}^m J_{j,\ell}(t,y) u_\ell
                     - \norm{u}^2 + \Ab [t,y,u]V\right]=0\ .
\end{equation}
\begin{remark} Due to Hypothesis B, $\nu_j (t,i,y,u)\in C_0^1(\RR^k)$ in the variable $y$. The derivatives $(\frac{\partial}{\partial y} A[t,y])(x)$ are bounded and continuous.

\begin{itemize}
\item For any $t \in [0,T], \Ab[t,y]$ is $C^1_0-$differentiable with respect to the vector $y$,
also there exists a constant $c_1$ such that:
\begin{equation}
\label{11*}
 \sup_{(t,u)}\norm{\frac {\partial \Ab[t,y]}{\partial y}}_{\mathbb R^k}
\leq c_1\norm{y}
\end{equation}
\item $J(t,y)$ is $C^1-$differentiable with respect to the vector $y$, also there exists a constant $c_2$ such that:
\begin{equation}
\label{11**}
\sup_{(t,u)}\left|\frac{\partial J[t,y]}{\partial y}\right|_{\mathbb R^k} \leq c_2\left|y\right|
\end{equation}
\end{itemize}

\end{remark}

\begin{corollary} Assume that Hypotheses A and B hold.
Suppose that $\Ab[t,y]$ is as in (\ref{eqn:jumpgenerator}).
Then the solution of the ordinary differential equation (\ref{25*}) is well posed for all terminal data $V^T \in \mathbb R^k$ and the solution $V_t$ is of class $C^1$ in the parameters $y$ and $u$ for all $t \in [0,T]$.

Suppose that $J_{j,\ell} (s,\alpha,y)$ is $C^1$-differentiable with respect to the additional parameter $\alpha\in \RR$, then the solution $V_t$ is $C^1$-differentiable with respect to the parameters $y$, $u$, and $\alpha$.
\end{corollary}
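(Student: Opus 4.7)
My strategy is to reduce (\ref{25*}) to a terminal-value ODE on $\RR^k$ by explicitly maximizing in $u$, invoke Picard--Lindel\"of on the resulting nonlinear system, and then derive the $C^1$ sensitivity from the classical smooth-dependence theorems for ODEs. The only nonroutine ingredient is the $u$-maximization, but strict concavity makes it well behaved.

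\emph{Step 1: Explicit optimization in $u$.} By Hypothesis A the rate $\nu$ is linear in $u$, so $u \mapsto \Ab[t,y,u]V(j)$ is linear; combined with the quadratic concave $\sum_\ell J_{j,\ell}(t,y)u_\ell - \norm{u}^2$ from (\ref{costfct}), the bracket in (\ref{25*}) is strictly concave quadratic of the form $-\norm{u}^2 + (b(t,j,y,V), u)$. By Hypothesis B the coefficient $b$ is $C^1$ in $y$, linear in $V$, hence jointly $C^1$ in $(y,V)$, and continuous in $t$. Over the bounded convex $U$ with smooth $\partial U$, the unique maximizer $\hat u(t,j,y,V)$ is either the interior critical point $\tfrac12 b$ or the unique boundary point at which $\tfrac12 b - u$ is an outward normal; the implicit function theorem applied to the KKT system gives $\hat u \in C^1$ in each regime, and the value
\[
 \mathcal H(t,j,y,V):=\max_{u\in U}\left[\sum_{\ell=1}^m J_{j,\ell}(t,y)u_\ell - \norm{u}^2 + \Ab[t,y,u]V(j)\right]
\]
is $C^1$ in $(y,V)$ globally by the envelope theorem, regardless of the transition locus.

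\emph{Step 2: Well-posedness and sensitivity.} Substituting produces the terminal-value system
\[
 \dot V(t,j) = -\mathcal H(t,j,y,V(t,\cdot)),\qquad V(T)=V^T,\qquad j\in\XX,
\]
on $\RR^k$. The right-hand side is continuous in $t$, locally Lipschitz in $V$, and of at most linear growth in $V$ by the bound (\ref{eqn:Abound}) on $\Ab$ together with boundedness of $U$, so Picard--Lindel\"of yields a unique solution $V\in C^1([0,T];\RR^k)$ on the full interval. Differentiating with respect to $y$ (and with respect to the additional parameter $\alpha$ in the second assertion) gives a linear variational equation with bounded continuous coefficients; the classical smooth-dependence-on-parameters theorem, see \cite{L} CH.\ XVIII and \cite{AH} CH 2, then delivers $C^1$ dependence of $V_t$ on $y$ (and on $\alpha$), while the $C^1$ dependence on the feedback variable is inherited from the $C^1$ character of $\hat u(t,j,y,V)$ established in Step 1.

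The main anticipated obstacle is that the pointwise optimizer $\hat u$ may lose $C^1$ regularity where the unconstrained maximizer $\tfrac12 b$ crosses $\partial U$. This is circumvented by carrying out the entire ODE argument at the level of the maximum value $\mathcal H$, which is $C^1$ globally by the envelope theorem under strict concavity and smooth boundary; hence the regularity conclusion for $V_t$ does not require pointwise regularity of $\hat u$ itself, only the global smoothness of the Hamiltonian.
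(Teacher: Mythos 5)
Your proof is correct in substance and follows the same underlying route the paper takes: reduce (\ref{25*}) to a terminal-value ODE system on $\RR^k$ and invoke the classical existence and smooth-dependence theorems of \cite{L} Ch.~XVIII and \cite{AH} Ch.~2. The difference is that the paper's entire argument is the single sentence ``the result is a direct consequence of Proposition \ref{prop:existence}'', which silently assumes that the maximized Hamiltonian $\mathcal H(t,j,y,V)$ has the regularity needed to feed into that proposition. Your Step~1 supplies exactly this missing ingredient: since the bracket is a strictly concave quadratic $-\norm{u}^2+(b,u)$ with $b$ affine in $V$ and $C^1$ in $y$, the maximizer over the convex bounded $U$ is unique, and Danskin's/the envelope theorem gives $\mathcal H\in C^1$ in $(y,V)$ (and in $\alpha$) with continuous derivative, even across the locus where the unconstrained critical point $\tfrac12 b$ crosses $\partial U$. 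This is the only genuinely nonroutine point in the corollary, and your treatment of it is the right one; the local Lipschitz and linear-growth bounds then justify Picard--Lindel\"of on all of $[0,T]$ and the variational equation for the parameter derivatives. One small caution: your closing remark that $C^1$ dependence ``on the feedback variable is inherited from the $C^1$ character of $\hat u$'' sits uneasily with your own (correct) observation that $\hat u$ is in general only piecewise $C^1$ (indeed the paper's Proposition \ref{E*}, following Xu, claims only Lipschitz continuity of the feedback law); the clean statement is that $V_t$ is $C^1$ in the parameters because $\mathcal H$ is, independently of the pointwise regularity of $\hat u$ --- which is precisely the point of your final paragraph, so you should let that formulation stand and drop the inheritance claim.
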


The result is a direct consequence of Proposition \ref{prop:existence}.
\vskip 0.2cm
In order to cover the control problem (4.1) we need to generalize Corollary 4.2 in such a way that the parameter $y$ is replaced by a curve $\yb$. We study smooth dependence of the solution of the HJB equation (\ref{25*}) above when replacing the parameter $y\in\RR^k$ by a curve $\yb = y(t)$,
$t\in [0,T]$, in $\RR^k$, i.e.

\begin{equation}
\label{14}
 \frac{\partial V(t,\yb)}{\partial t}
               + \max_{u}\left[  \sum_{\ell = 1}^m J_{j,\ell} (t,\yb) u_\ell
               - \norm{u}^2 + \Ab [t,j,\yb,u] V(t,j)\right] = 0,
\end{equation}
respectively
\begin{equation}
\label{15}
H(t,j,V,\yb)=\max_{u}\left[\sum_{\ell = 1}^m J_{j,\ell} (t,\yb) u_\ell - \norm{u}^2
                         + \Ab [t,j,\yb,u] V(t,j)\right]\, .
\end{equation}
\vskip 0.2cm
Let us introduce a curve $\yb$ in the form of a piece of a straight line into the value function. For any $(t,j) \in [0,T]\times \XX$ and
$\yb^1, \yb^2 \in C([0,T],M)$ we define:
\begin{equation}
V(t,\alpha,j):= V(t,j,\yb^1
                +\alpha (\yb^2-\yb^1)) , \quad\alpha\in [0,1].
\end{equation}
Hence the smooth dependence on the solutions of the HJB equation on the functional vector valued parameter $\yb$ reduces to dependence on the real parameter $\alpha$. If the directional derivative $\partial_{\yb^2-\yb^1} V$ of $V(t,j,\yb)$ exists and is continuous we have:
\begin{eqnarray}\label{15**}
V(t,j,\yb^2)-V(t,j,\yb^1)=
\int_0^1{\partial_{\yb^2-\yb^1} V(t,j,\yb^1+\alpha((\yb^2-\yb^1)))d\alpha.} \nonumber
\end{eqnarray}

We adopt the assumptions we made when we studied smooth dependence on the real parameter $y$.

\begin{theorem}
Under the previous conditions and Hypotheses A and B we have that for any $\yb \in C([0,T],\RR^k)$ the solution of equation (\ref{14}) is Lipschitz continuous in $\yb$ uniformly, i.e. for $\yb^1, \yb^2 \in C([0,T],\RR^k)$, there exists a constant $K \geq 0$ such that

\begin{equation}
\sup \limits_{(t,j) \in [0,T]\times \XX}
\norm{{V(t,j,\yb^1)-V(t,j,\yb^2)}} \leq K \sup_{t \in [0,T]} \norm{\yb^1-\yb^2}.
\end{equation}
For measure valued functions $\yb \in \MM([0,T])$ the Euklidean $k$-norm
on the right hand side can be replaced by a weak*-norm, namely:
\begin{equation}\label{15*}
\sup_{(t,j) \in [0,T]\times \XX} \norm{{V(t,j,\yb^1)-V(t,j,\yb^2)}}
\leq K  \Norm{\yb^1-\yb^2}^{*}_2.
\end{equation}
where
$\Norm{\yb^1-\yb^2}^{*}_2
      := \sup_{f \in C^2([0,T],\RR^k)\vert\Norm{f}\le 1}(f,\yb^1-\yb^2)$.
\end{theorem}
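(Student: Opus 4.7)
The plan is to compare the HJB equations for $V^1(t,j):=V(t,j,\yb^1)$ and $V^2(t,j):=V(t,j,\yb^2)$, viewing each as a terminal value problem of the form (\ref{14}) on $[0,T]\times\XX$. By Corollary 4.2 (applied for each fixed curve $\yb^i$) the two solutions are classical $C^1$ functions of $t$, and the quadratic concavity of the Hamiltonian in $u$ on the bounded convex set $U$ guarantees that unique maximizers $\hat u^i(t,j)\in U$ exist, depend in a Lipschitz fashion on $V^i$, $\yb^i_t$, and are bounded uniformly.

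I would then insert the sub-optimal control $\hat u^2$ into the HJB equation for $V^1$ and use the exact identity for $V^2$. Setting $W:=V^1-V^2$, subtraction yields
\begin{equation*}
-\partial_t W(t,j) \;\ge\; \Ab[t,j,\yb^1_t,\hat u^2]\,W(t,j)\;+\;R_1(t,j),
\end{equation*}
where
\begin{equation*}
R_1(t,j)=\sum_{\ell} \bigl[J_{j,\ell}(t,\yb^1_t)-J_{j,\ell}(t,\yb^2_t)\bigr]\hat u^2_{\ell}
+\bigl[\Ab[t,j,\yb^1_t,\hat u^2]-\Ab[t,j,\yb^2_t,\hat u^2]\bigr]V^2(t,j).
\end{equation*}
Hypothesis B (uniform Lipschitz continuity of $J$ and $\nu$ in the variable $y$) together with boundedness of $U$ and a standard a priori $L^\infty$ bound on $V^2$ gives $|R_1(t,j)|\le C\,|\yb^1_t-\yb^2_t|$. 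Swapping the roles of $(\yb^1,\hat u^1)$ and $(\yb^2,\hat u^2)$ produces the symmetric reverse inequality, so that $W$ satisfies a pointwise two-sided estimate driven by $\Ab$ plus a source of norm at most $C\sup_{s}|\yb^1_s-\yb^2_s|$.

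Because $\Ab$ is a $Q$-matrix (Section 2) and hence the propagator it generates is a contraction on $\RR^k$, integrating the inequality backward from $T$ and applying Gronwall's lemma — with terminal bound $|W(T,j)|\le L_T|\yb^1_T-\yb^2_T|$ from the Lipschitz continuity of $V^T$ — yields the first estimate with $K$ depending only on $T$, $L_T$, and the Lipschitz constants of $J$ and $\nu$.

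For the weak$^{*}$ version, Hypothesis B furnishes not just Lipschitz continuity of $J_{j,\ell}(t,\cdot)$ and $\nu_i(t,j,\cdot)$ but uniform Lipschitz continuity of their gradients, so both sit in a bounded set of $C^2(\RR^k)$ (uniformly in the remaining variables). When $\yb^1_t,\yb^2_t\in\MM(\XX)$, the pointwise differences appearing in $R_1$ can be written as pairings of $\yb^1_t-\yb^2_t$ against these $C^2$ functions, whence $|R_1(t,j)|\le C\,\Norm{\yb^1_t-\yb^2_t}^{*}_2$. Re-running the same Gronwall argument with this sharper bound delivers (\ref{15*}). The main obstacle is ensuring that the maximizer $\hat u^2$ is a measurable/Lipschitz selection so that the inequality for $W$ really holds pointwise in $(t,j)$; this is resolved by the explicit quadratic structure, which allows writing $\hat u$ via a projection onto $U$ of a smooth $(\yb,V)$-dependent affine expression and hence inherits the needed regularity from Corollary 4.2.
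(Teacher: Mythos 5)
Your proof is correct in substance, but it follows a genuinely different route from the paper's. The paper does not compare the two HJB equations at all: it reduces dependence on the curve $\yb$ to dependence on the scalar $\alpha$ along the segment $\yb^1+\alpha(\yb^2-\yb^1)$, invokes classical smooth-dependence-on-parameters theory for the ODE system (\ref{14}) (via Corollary 4.2, itself a consequence of Proposition \ref{prop:existence}) to obtain the directional derivative $\partial_{\yb^2-\yb^1}V$, and then reads off the Lipschitz bound from the identity $V(t,j,\yb^2)-V(t,j,\yb^1)=\int_0^1\partial_{\yb^2-\yb^1}V(t,j,\yb^1+\alpha(\yb^2-\yb^1))\,d\alpha$; the weak$^*$ estimate (\ref{15*}) is then dispatched in one line with the remark that the norm is weaker and that $C^2$ is dense in $C$. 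You instead run the standard sub-optimal-control comparison: insert $\hat u^2$ into the equation for $V^1$, subtract, bound the source by the Lipschitz constants of $J$ and $\nu$ in $y$, symmetrize, and close with the contraction property of the $Q$-matrix propagator plus Gronwall. Your route is the one actually used in the sensitivity-analysis literature the paper cites (Kolokoltsov--Yang), and it buys more: it needs only Lipschitz, not $C^1$, dependence of the coefficients on $y$; it produces an explicit constant $K$; and it avoids having to justify that the pointwise maximum in (\ref{15}) is differentiable in the parameter (an envelope-theorem step the paper leaves implicit). The paper's route is shorter but rests on the unproved extension of Corollary 4.2 to Banach-space-valued parameters. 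One caveat on the second estimate: both you and the paper are thin there. The paper's claim that (\ref{15*}) ``follows since the norm is weaker'' is backwards --- putting a weaker norm on the right-hand side makes the statement stronger, not weaker --- and your gradient-pairing argument is the right idea but still requires the variational derivatives of $J$ and $\nu$ to be admissible test functions in a fixed ball of $C^2$, which Hypothesis B only supplies in the $C^{1,1}$ sense; since $\XX$ is finite this can be repaired by finite-dimensional norm equivalence, but it deserves an explicit sentence.
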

The first result follows from Proposition 3.1 where the parameter $\yb$ is chosen from the Banach space of continuous functions with the supremum norm. The second result follows since the norm is weaker. We point out that $C^2$ is dense in $C$.

An analogous result holds for the value function $V$ associated with the Koopman dynamics.
Moreover, there exists a unique optimal Feedback control law $\hat u$ to the value function $V$ in (4.1).

\begin{proposition}\label{E*}
Under Hypotheses A and B, given a 
final payoff $V'^T$ the optimal control $\hat{u}$ defined by the cost function (4.1), is of feedback form $\hat{u}=\Gamma(t,\cdot)$ and is Lipschitz continuous i.e, for any
$\bm{\eta},\xb \in C_{y}\left([0,T],\RR^k\right)$
\begin{equation*}
\Gamma(t,\eta_t)-\Gamma(t,x_t)
\leq k_1 \sup_{s \in [0,T]}
\norm{\eta_s - x_s}_{\mathbb{R}^k}, \ \forall t \in [0,T],
\end{equation*}
\end{proposition}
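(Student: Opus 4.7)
The plan is to exploit the quadratic-concave structure of the Hamiltonian to obtain an explicit formula for the pointwise maximizer in the HJB equation (\ref{25*})--(\ref{14}), and then to transfer Lipschitz continuity through this formula using Theorem 4.3 together with Hypothesis B.

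First I would fix $(t,j)$ and a curve $\yb\in C([0,T],\RR^k)$ and study the Hamiltonian
\[
H(t,j,V,\yb,u) = \sum_{\ell} J_{j,\ell}(t,\yb)\,u_\ell - \norm{u}^2 + \Ab[t,j,\yb,u]V(t,j).
\]
By Hypothesis A, $\nu$ (and therefore $\Ab$) is linear in $u$; combined with the strictly concave penalty $-\norm{u}^2$ this makes $H$ strictly concave in $u$, so the pointwise maximizer over the convex bounded set $U$ is unique. The first-order condition yields an explicit unconstrained optimum $\tilde u_\ell(t,j,\yb)$ that is an affine combination of $J_{j,\ell}(t,\yb)$ and of differences of $V$ weighted by the rates $\nu_\ell(t,j,\yb)$. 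Projecting onto $U$ (the map $\Pi_U$ being $1$-Lipschitz because $U$ is convex with smooth boundary) produces the admissible optimum $\hat u = \Gamma(t,\yb):=\Pi_U(\tilde u)$, which is of the claimed feedback form.

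Next I would establish Lipschitz continuity in the curve argument. Given $\bm\eta,\xb\in C([0,T],\RR^k)$, decompose $\Gamma(t,\eta_t)-\Gamma(t,x_t)$ via the triangle inequality into contributions from (i) $J_{j,\ell}(t,\cdot)$, (ii) the jump rates $\nu_\ell(t,j,\cdot)$, and (iii) the value function $V(t,\cdot;\yb)$. Hypothesis B supplies uniform Lipschitz bounds (uniform in the remaining variables) on (i) and (ii), while Theorem 4.3 provides Lipschitz control of (iii) in the supremum norm on $C([0,T],\RR^k)$. Uniform boundedness of $V$ on $[0,T]\times\XX$ -- a consequence of the finite horizon together with boundedness of $J_{j,\ell}$, $\nu$, and ${V'}^T$ on the bounded parameter set $M$ -- converts the product of a bounded and a Lipschitz factor into a Lipschitz one. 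Combining these estimates and using that $\Pi_U$ is $1$-Lipschitz yields
\[
\Norm{\Gamma(t,\eta_t)-\Gamma(t,x_t)} \le k_1 \sup_{s\in[0,T]} \norm{\eta_s-x_s}
\]
for a suitable constant $k_1$ depending on $T$, the bounds on $\nu$, $J$, $V'^T$ and the Lipschitz constants from Hypothesis B and Theorem 4.3.

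The main obstacle I anticipate is the nonlocal dependence of $V$ on the entire curve $\yb$: even though $\Gamma$ is evaluated at time $t$, the value $V(t,j;\yb)$ that feeds into $\tilde u$ is determined by the restriction $\yb|_{[t,T]}$ through the backward HJB equation (\ref{14}). This is precisely why the right-hand side of the claimed inequality is $\sup_{s\in[0,T]}\norm{\eta_s-x_s}$ rather than a pointwise quantity, and it forces the use of the functional Lipschitz estimate of Theorem 4.3 rather than a direct calculus argument. A secondary technical point is to verify that $\tilde u$ stays in a bounded region so that projection onto $U$ is well-behaved, which follows from the a priori bounds on $\nu$, $J$, and $V$ already at our disposal.
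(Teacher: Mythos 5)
Your argument is correct, but it is not the paper's: the paper offers no proof at all beyond the single sentence ``A proof by Xu may be found in [30]'', i.e.\ it defers entirely to Xu's result on uniformly Lipschitz feedback optimal controls in a linear--quadratic framework. What you do instead is reconstruct that result in the present concrete setting: since $\Ab[t,j,\yb,u]V$ is linear in $u$ and the running cost carries the penalty $-\norm{u}^2$, the Hamiltonian has the form $-\Norm{u-b/2}^2+\mathrm{const}$ for an explicit vector $b$ built from $J_{j,\ell}$, the rates $\nu$, and differences of $V$; the constrained maximizer is therefore $\Pi_U(b/2)$, and the $1$-Lipschitz projection onto the convex set $U$ transfers the Lipschitz dependence of $b$ on the curve (via Hypothesis~B for $J$ and $\nu$, and via Theorem~4.3 for $V$) to $\Gamma$. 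This buys a self-contained proof that makes visible exactly where each hypothesis enters and why the modulus of continuity must be the supremum distance of the curves (through the nonlocal dependence of $V$ on $\yb\vert_{[t,T]}$), at the cost of some length; the paper's citation is shorter but leaves the reader to verify that Xu's linear--quadratic framework actually accommodates the jump-generator term $\Ab[t,j,\yb,u]V$. The only points worth tightening in your sketch are (a) an explicit statement of the identity $\arg\max_{u\in U}\left(\langle b,u\rangle-\norm{u}^2\right)=\Pi_U(b/2)$, which is what legitimizes the projection step, and (b) a remark that Theorem~4.3 is established independently of this proposition (through well-posedness of the HJB equation), so that invoking the Lipschitz continuity of $V$ in $\yb$ here is not circular.
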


\begin{proof}
A proof by Xu may be found in [30].
\end{proof}

The line of arguments and results presented above for the control problem with value function $V$ carries over to the modifications considered in this paper. We shall not repeat it.

\section{Convergence of N-particle Approximations}

In Physics and Biology scaling limits and analyzing scaling limits are well established techniques which allow to focus on particular aspects of the system under consideration. Scaling empirical measures by a small parameter $h$ in such a way that the measure $h(\delta_{x_1}+\ldots + \delta_{x_N})$ remains finite when the number $N$ of particles or species tends to infinity and the individual
contribution becomes negligible allows to treat the ensemble as continuously
distributed.

Scaling $k^{th}$-order interactions by $h^{k-1}$ reflects the idea that they are
more rare than $k-\ell$ order ones for $1\le \ell < k$ and makes them neither
negligible nor overwhelming. This scaling transforms an arbitrary generator
$\Lambda_k$ of a $k^{th}$-order interaction into
\begin{displaymath}
  \Lambda_k^hF(h \delta_{\xb})
  = h^{k-1} \sum_{I \subset \left\{1, \cdots,n\right\},\left|I\right|=k}
  \int_{\XX^k}{}\left[F(h \delta_{\xb} - h\delta_{\xb_I}+h\delta_y)
  -F(h\delta_{\xb})\right] \times P(x_{\xb_I};dy)
\end{displaymath}
with positive kernel $P(x_{\xb_I};dy)$.
The $N$-mean field limit is a law of large numbers for the first order interactions
given by the $N$-mean field evolutions. For the special case of pure jump type
$N$-mean field evolutions, cf. (\ref{eqn:jumpgenerator}), we prove weak convergence to
the solution of the kinetic equation (\ref{16}) by exploiting properties of the
corresponding propagators. The procedure consists of introducing the scale
$h= \frac {1}{N}$ and as explained
in Section 3 by unifying space, i.e. it is pursued by substituting
$f(\mathbf{x})$ by $F(\frac 1{\norm{\xb}} \delta_\mathbf{x})$ where $\norm{\xb}$ denotes the length of the vector.
In this section we adopt the representation on $\RR^k$ which was introduced in (2.4).The property exploited in the construction proving the $N$-mean field limit is:

\begin{proposition}
\label{aa}
Let $L^i:\mathbb{R}^k\rightarrow\mathbb{R}^k , i=1,2, t\geq 0,$ be two families of arbitrary bounded matrix valued functions which are continuous in time. Assume moreover, that $U^{t,r}_i$ are two linear propagators in $\mathbb{R}^k$ with $\left\|U^{t,r}_i\right\|\leq C_1, i=1,2$, such that for any $f \in \mathbb{R}^k$ the equation
\begin{equation}\label{eqn:1a}
\frac{d}{ds}U^{t,s}f= U^{t,s}L_s f, \quad \frac{d}{ds}U^{s,r} f=-L U^{s,r} f, \ \ t\leq s\leq r,
\end{equation}
holds in $\mathbb{R}^k$ for both pairs $\left(L^i,U_i\right)$.
Then we have
\vskip 0.1cm
\begin{equation*}\begin{array}{lc}
i)&U^{t,r}_2-U^{t,r}_1=\int^r_t{U^{t,s}_2(L_s^{2}-L_s^{1})}U^{s,r}_1ds\\
ii)&\left\|U^{t,r}_2-U^{t,r}_1\right\|_{B_1(0)\rightarrow \mathbb R^k} \leq C_1^2 (r-t) \sup \limits_{t\leq s\leq r}\left\|L_s^{2}-L_s^{1}\right\|_{B_1(0)\rightarrow \mathbb R^k}\ .
\end{array}\end{equation*}
\end{proposition}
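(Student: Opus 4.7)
The plan is to use the standard Duhamel/variation-of-parameters trick: introduce the auxiliary operator-valued function
\[
\Psi(s) := U^{t,s}_2\, U^{s,r}_1, \qquad s \in [t,r],
\]
and compute the fundamental theorem of calculus for $\Psi$ between its endpoints. At the endpoints the propagator identity $U^{\sigma,\sigma}_i = \mathrm{id}$ gives $\Psi(t) = U^{t,r}_1$ and $\Psi(r) = U^{t,r}_2$, so $U^{t,r}_2 - U^{t,r}_1 = \int_t^r \Psi'(s)\, ds$. This reduces part (i) to computing $\Psi'(s)$.

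For the derivative, I would apply the product rule together with the two differential equations in (\ref{eqn:1a}). The forward equation gives $\frac{d}{ds} U^{t,s}_2 = U^{t,s}_2 L^2_s$, while the backward equation gives $\frac{d}{ds} U^{s,r}_1 = -L^1_s U^{s,r}_1$. Adding these contributions yields
\[
\Psi'(s) \;=\; U^{t,s}_2 L^2_s U^{s,r}_1 \;-\; U^{t,s}_2 L^1_s U^{s,r}_1
        \;=\; U^{t,s}_2 (L^2_s - L^1_s) U^{s,r}_1,
\]
and integration from $t$ to $r$ produces the identity in (i). The one technicality worth checking is that the product rule is legitimate; since everything lives in a finite-dimensional space of $k\times k$ matrices and the families $L^i_s$ are continuous in time, strong (equivalently norm) differentiability of $s \mapsto U^{t,s}_i$ and $s \mapsto U^{s,r}_i$ on all of $\mathbb{R}^k$ justifies it without further work.

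Part (ii) follows immediately by taking operator norms on both sides of the integral identity, using submultiplicativity and the uniform bound $\|U^{t,s}_i\| \leq C_1$:
\[
\|U^{t,r}_2 - U^{t,r}_1\| \;\leq\; \int_t^r \|U^{t,s}_2\|\,\|L^2_s - L^1_s\|\,\|U^{s,r}_1\|\, ds
 \;\leq\; C_1^2 (r-t)\, \sup_{t\leq s\leq r}\|L^2_s - L^1_s\|.
\]

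There is no real obstacle here; the only care needed is bookkeeping of signs in the backward equation (that is why $\Psi'$ comes out as a clean difference rather than a sum) and noting that the boundedness hypothesis $\|U^{t,s}_i\|\leq C_1$ is assumed uniformly on the time simplex, which is what lets the supremum come outside the integral.
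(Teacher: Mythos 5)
Your proof is correct and is exactly the standard Duhamel/variation-of-constants argument: the paper itself offers no proof of this proposition, merely citing it as a well-known result on bounded linear operators (Engel--Nagel, Kolokoltsov), and your computation of $\Psi(s)=U^{t,s}_2U^{s,r}_1$ with the endpoint evaluation and product rule is precisely the argument underlying those references. The justification of differentiability in the finite-dimensional setting and the norm estimate for part ii) are both handled adequately.
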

The result is adopted from a well known result on bounded linear operators see e.g. \cite{EN}, and \cite{K3}. The representation is used to derive the subsequent properties.

The propagator $\Lambda(t,s,\cdot)$ generated by the operator $\Ab^*$ in
(\ref{jumpmatrix}) is bounded.

\vskip 0.2cm
As in Section 3 let $\psi_N^{t,s}, t \leq s,$ be the $N$-mean field propagator generated by (\ref{eqn:NMFgenerator}) and assume that $\phi^{t,s}$ is the Koopman propagator defined in (\ref{171}). Since the linear combination $\AF^N$ of copies of the bounded operators $\Ab^*$ in (\ref{eqn:jumpgenerator}) is also bounded, Remark \ref{11111} implies that $\psi_N^{t,s}$ is bounded.
In the first step let us compare the Koopman dynamics with the one of the $N$-mean field.
Exploiting Proposition \ref{aa}i) we derive an estimate for the deviation of the propagator $\psi^{t,s}_N$ from the Koopman propagator $\phi^{t,s}$. We first study the unrealistic case of a common initial condition.
\vskip 0.1cm
Let us fix the control parameter and set $x^N = \fracN \sum_{\ell=1}^k n_\ell e_\ell$ with $\sum_{\ell=1}^k n_\ell = N$.
By construction $\AF^N$ and $\psi^{t,s}_N$ satisfy equation (\ref{eqn:1a}) then Proposition \ref{aa} reveals:

\begin{equation}\label{29}
\left[(\psi_N^{t,s}-\phi^{t,s})F\right](x^N)\hspace{-2pt}
=\hspace{-2pt} \int_t^s \!\! {\left[\psi_N^{t,r}(\hat{\AF}^N[r,\alpha(t,r,x^N)]-\AK[r,\alpha(t,r,x^N)])
     \phi^{r,s}F\right](x^N) dr}
\end{equation}
for $F \in C^2 (M)$ and the flow $\alpha$ as in Proposition 3.1, independent of the control parameter $u\in U$.  We continue by estimating

\begin{eqnarray}\label{30}
\lefteqn{
 \sup_{y\in M}\norm{\left[(\psi_N^{t,s}-\phi^{t,s})F\right]\! (y\! )}\!
 \leq \!\! \int_t^s \! \Norm {\psi_N^{t,r}}
            \sup_{\Atop{y,y' \in M}{r\in[0,T]}}\!
                 \norm{(\hat{\AF}^N[r,y'] -\AK[r,y']) \phi^{r,s} F(y\! )}\! ds
}  \nonumber \\
 &\leq&\frac{(s-t)}{N}\Norm{\psi_N^{t,r}}
        \Norm{\hat{\AF}^N -\AK}
        \Norm{\phi^{r,s}} \Norm{F}_{C^2(M)}
 \leq \frac{C(T)}{N} \Norm{F}_{C^2(M)} \qquad\qquad
\end{eqnarray}
for $0 \leq t \le r \leq s \leq T,$ and
Proposition 2.6 was applied in the last step. The Sobolev type norm
$\Norm{F}_{C^2(M)}$ combines the supremum norm of $F$ and its second derivative.
The constant $C(T)$ summarizing the three operator norms and integration with respect to time.

This estimate will in a further step be applied to estimate the order of convergence in the mean field limit. The initial values are chosen to suit the operators and hence differ while $N$ changes. In fact, we shall assume that the initial conditions
\begin{equation}\label{300}
 x_0^N=\frac{1}{N}\sum_{\ell=1}^k \eta_{0,\ell} e_{\ell}\ \in\RR^k,
\end{equation}
with $\sum_{\ell=1}^k \eta_{0,\ell} = N$, of the Kolmogorov equation for the generators $\hat\AF^N$ converge in $\RR^k$,
as $N\rightarrow \infty$, to a vector $x_0 \in\RR^k$ in such a way that

\begin{equation}\label{3011}
\norm{x_0^N-x_0} \leq \frac{k_1}{N}\mbox{ with a constant } k_1 \geq 0\ .
\end{equation}

\begin{theorem}\label{FF}
Let the assumptions of Hypotheses A, B, and Proposition 5.1 be satisfied and let the initial conditions $ x_0^N \in\RR^k$ be subject to (\ref{3011}). Assume a fixed control parameter $\gamma\in U$. Then the following bounds hold:

\begin{itemize}
 \item [i)] For $t\in[0,T]$ with arbitrary $T\geq 0,$ we have
  \begin{equation*}
  \norm{(\psi^{0,t}_{N,\gamma} F)(x_0^N)-(\phi^{0,t}_\gamma F)(x_0)}
  \leq \frac{C(T)}{N} \left(T\Norm{F}_{C^2 (M)}+k_1\right)
  \end{equation*}
  with a constant $C(T)$ independent of $\gamma$;
 \item [ii)] For $J(t,x,\gamma)$ on $[0,T]\times\RR^k\times U$ there holds	
	\begin{equation*}
     \norm{{\int_t^T {\!\!J(s, x_{\gamma,s}^N, \gamma)ds}
      -\int_t^T {\!\!J(s,x_{\gamma,s},\gamma) ds}}}  \\
   \leq \frac{C(T)}{N} \left(T\Norm{J}^{*}_2 + k_1\right)
  \end{equation*}
\end{itemize}
where $x_{\gamma,t}^N $ is the law of the Markov process specified by the propagator $\psi_{N,\gamma}^{0,t}$ and $x_{\gamma,s}$ is the law of the
Markov process given by the Koopman propagator $\phi^{0,t}_\gamma$ or equivalently $x_{\gamma,s}=\alpha(t,s,x_0,\gamma)$. The norm $\Norm{J}^*_2$ was introduced in (\ref{15*}).
\end{theorem}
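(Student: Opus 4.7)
The plan is to handle (i) via the triangle inequality, inserting $(\phi^{0,t}_\gamma F)(x_0^N)$ as an intermediate quantity so that the total error splits into a propagator-difference term evaluated at a common initial condition and a flow-Lipschitz term measuring the mismatch of the initial data; then (ii) will follow from (i) applied to coordinate functions, together with the $C^1$ regularity of $J$ furnished by Hypothesis B.

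For (i), I would write
\begin{equation*}
(\psi^{0,t}_{N,\gamma}F)(x_0^N) - (\phi^{0,t}_\gamma F)(x_0)
 = \bigl[(\psi^{0,t}_{N,\gamma}F)(x_0^N) - (\phi^{0,t}_\gamma F)(x_0^N)\bigr]
 + \bigl[(\phi^{0,t}_\gamma F)(x_0^N) - (\phi^{0,t}_\gamma F)(x_0)\bigr].
\end{equation*}
The first bracket is controlled by the estimate (\ref{30}) at the common initial condition $x_0^N$; Proposition \ref{ANexpansion} shows that $\hat\AF^N-\AK$ is of order $h^2=1/N^2$ on $C^2(M)$, and the aggregation over $N$ players leaves a residual operator of order $1/N$ when measured in the $C^2$ norm, yielding a bound of the form $C(T)\tfrac{T}{N}\Norm{F}_{C^2(M)}$. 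The second bracket is treated by the defining formula $\phi^{0,t}F(y)=F(\alpha(0,t,y,\gamma))$, the mean value theorem, and the Lipschitz continuity of the flow in its initial datum provided by Corollary \ref{b}:
\begin{equation*}
|F(\alpha(0,t,x_0^N,\gamma))-F(\alpha(0,t,x_0,\gamma))|\le \Norm{F}_{C^1(M)}\,C(T)\,|x_0^N-x_0|\le \Norm{F}_{C^1(M)}\,C(T)\,k_1/N.
\end{equation*}
Using $\Norm{F}_{C^1(M)}\le \Norm{F}_{C^2(M)}$ and absorbing operator norms into a single constant $C(T)$ produces the claimed bound.

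For (ii), I would first apply (i) to each coordinate function $F_\ell(y)=y_\ell$, which on the bounded set $M$ has uniformly bounded $C^2(M)$-norm; this yields the auxiliary estimate
\begin{equation*}
|x^N_{\gamma,s}-x_{\gamma,s}| \le \frac{C(T)}{N}(T+k_1), \qquad s\in[0,T].
\end{equation*}
The $C^1$ regularity of $J$ in $y$ from Hypothesis B then gives
\begin{equation*}
J(s,x^N_{\gamma,s},\gamma)-J(s,x_{\gamma,s},\gamma)=\int_0^1 (\nabla_y J)\bigl(s,x_{\gamma,s}+\theta(x^N_{\gamma,s}-x_{\gamma,s}),\gamma\bigr)\cdot(x^N_{\gamma,s}-x_{\gamma,s})\,d\theta.
\end{equation*}
Integrating in $s\in[t,T]$ and recognizing the right-hand side as a duality pairing with the difference curve $s\mapsto x^N_{\gamma,s}-x_{\gamma,s}\in \RR^k\cong\MM(\XX)$, the supremum bound can be expressed via the weak dual norm $\Norm{J}_2^{*}$ introduced after (\ref{15*}). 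The use of this weaker norm is legitimate because, thanks to the $h^2$ nature of the generator error in Proposition \ref{ANexpansion}, the deviation $x^N_{\gamma,s}-x_{\gamma,s}$ is naturally quantified against $C^2$ test functions, and $C^2$ is dense in $C$, so the $C^1$ function $J$ is an admissible test function in this weaker sense.

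The main obstacle is precisely the transition from the pointwise Lipschitz control on $J$ to the Sobolev-type dual norm $\Norm{J}_2^{*}$ appearing in the statement. Indeed, the primary estimate (\ref{30}) compares $\psi^{t,s}_N$ to $\phi^{t,s}$ only against $C^2$ test functions, because the leading part of $\hat\AF^N-\AK$ is the second-order Taylor remainder supplied by Proposition \ref{ANexpansion}. Transferring this to the scalar time-integral of $J$ along the two flows requires one to carry the time integration inside the duality pairing and to keep the initial-data contribution $k_1/N$ separate from the time-accumulated $T/N$ contribution, exactly in the pattern already used for (i).
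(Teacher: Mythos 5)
Part (i) of your argument coincides with the paper's own proof: the same insertion of $(\phi^{0,t}_\gamma F)(x_0^N)$ as an intermediate term, the same appeal to the propagator-comparison estimate (\ref{30}) for the first bracket, and the same use of the Lipschitz continuity of the nonlinear flow (Proposition \ref{prop:existence}, Corollary \ref{b}) together with assumption (\ref{3011}) for the second. For part (ii) you take a genuinely different route. The paper writes the time integral as a limit of Riemann sums and applies Theorem 4.3 together with part (i) term by term; you instead apply part (i) to the coordinate functions $F_\ell(y)=y_\ell$ to get a uniform-in-$s$ bound on $\norm{x^N_{\gamma,s}-x_{\gamma,s}}$ and then invoke the $C^1$ regularity of $J$ in $y$ from Hypothesis~B. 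Your version is more explicit and avoids the Riemann-sum limiting argument, and the reduction to coordinate functions is legitimate since linear functions lie in $C^2(M)$ with uniformly bounded norm. What your route naturally produces, however, is a bound in terms of the Lipschitz constant $\sup\norm{\partial J/\partial y}$, i.e.\ a $C^1$-type norm of $J$, rather than the weak dual norm $\Norm{J}^{*}_2$ appearing in the statement; you flag this yourself, and your density-of-$C^2$-in-$C$ justification for switching norms is the one loose step. The paper's detour through Theorem 4.3 is precisely its mechanism for making $\Norm{\cdot}^{*}_2$ appear, though the paper is no more rigorous on this point than you are. Both arguments separate the $k_1/N$ initial-data contribution from the $T/N$ time-accumulated contribution in the same way, so the final bounds agree.
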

\begin{proof} i)
The chain of inequalities (5.3) holds uniformly for all $\gamma\in U$.
For $F \in {C^2 (M)}$ we have

\[
 \norm{(\psi_{N,\gamma}^{0,t}F)(x_0^N)\!-\!(\phi_\gamma^{0,t}F)(x_0)}
 \!\!\leq\!\! \norm{(\psi_{N,\gamma}^{0,t}F\!-\!\phi_\gamma^{0,t}F)(x_0^N)}
  + \norm{(\phi_\gamma^{0,t}F)(x_0^N)\!-\!(\phi_\gamma^{0,t}F)(x_0)}.
\]
Estimating the first term by (\ref{30}) and the second one by exploiting the Lipschitz continuity of the nonlinear flow guaranteed by Proposition
\ref{prop:existence} reveals the estimate.
ii) Let us represent the integral
   $\int_t^T {J(s, \fracN X_{\gamma,\gamma,s}^N,\gamma)ds}$ as the limit of Riemannian sums. Then the wanted inequality is obtained by applying Theorem 4.3 and part~i) term by term and passing to the limit.
   This finishes the proof.
\end{proof}
In the next section we shall assume that all agents are following a common strategy $\gamma(t,j)$ but one player, for instance the first one, who applies a different control $u_{1,t}=\tilde{\gamma}(t,j)$.
\nopagebreak

\section{Mean field limits as an $\epsilon$-Nash equilibrium}
A strategy portfolio $\Gamma$ in a game of $N$ agents with payoffs $V_i(\Gamma), i=1,...,N,$ is called an $\epsilon$-Nash equilibrium if, for each player $i$ and an acceptable individual strategy $u_i$
\[
 V_i(\Gamma) \geq V_i(\Gamma_{-i},u_i)- \epsilon,
\]
where $(\Gamma_{-i},u_i)$ denotes the profile obtained from $\Gamma$ by substituting the strategy of player $i$ with $u_i.$

Consequently we present two couples, consisting of a family of N-mean field games, where one player has a different preference than all others, and a game with the corresponding Koopman dynamics. For the sake of a shorter notation we use again $x^n:= \fracN \sum_{\ell}^k \eta_{\ell} e_{\ell}$.
In the first model the $N$-mean field acts as a single player and the differing preference $\tilde\gamma$ is part of it and in the second model this player is kept separate.
In the first setting the dynamics of $N$ interacting agents will be generated by the following operator

\begin{equation}\label{31}
\hat{\AF}^N [t, x^N,\gamma,\tilde\gamma] F( x^N)
  = \left[ \hat{\AF}^N [t, x^N,\gamma] + A^1[t, x^N,\tilde\gamma]
     - A^1[t,x^N,\gamma]\right] F( x^N)
\end{equation}
where $\hat{\AF}^N $ and $A^1=\Ab^*$ were defined in
(\ref{eqn:NMFgenerator}) respectively (\ref{eqn:jumpgenerator}), and $F \in C_\infty^2 (\RR^k)$.

\begin{remark}\label{def:psiN}
Let $\psi_{N,\gamma,\tilde{\gamma}}^{0,t}$ the $N$-mean field propagator on $C_\infty^1(\RR^k)$ generated by
$\hat{\AF}^N [t, x,\gamma,\tilde\gamma]$.
Since $\AF^N[t, x,\gamma,\tilde\gamma]$ is a linear combination of the linear operator $\Ab^*$, $\psi_{N,\gamma,\tilde\gamma}^{0,t}$ possesses the same properties as the propagator $\Lambda$ which is generated by $\Ab^*$, i.e. it is linear, for $T$ sufficiently small it is a contraction operator hence bounded, and it is Lipschitz continuous in the initial condition, i.e. it possesses the Feller property.
\end{remark}

\begin{theorem}\label{G}
Suppose Hypotheses A and B hold for the family of Markov jump type operators $\Ab^*[t,y,\gamma(t,.)]$ with a class of functions
$\gamma: \mathbb R^+ \times \mathbb\XX\rightarrow U,$ which are continuous in the first variable and Lipschitz continuous in the second one, let
$\psi_{N,\gamma,\tilde\gamma}^{0,t}$ be as above,
and let $\phi^{0,t}_\gamma$ the Koopman propagator (\ref{171}).
Then the following bounds hold:

\begin{itemize}
 \item [i)] Let $t \in [0,T]$ with any $T \geq 0$. For $F\in C^2(M)$:
  \begin{equation*}
	\norm{(\psi_{N,\gamma,\tilde\gamma}^{0,t}F)(x_0^N)
     -(\phi^{0,t}_\gamma F)(x_0)}
     \leq \frac{C(T)}{N} \left(T\Norm{F}_{C^2 (M)} + k_1\right);
  \end{equation*}
	with a constant $C(T)$ independent on $\tilde{\gamma}$.
 \item [ii)] Let $J(t,x,\gamma)$ be defined on $ [0,T]\times\RR^k\times U$. Then the following bounds exist:	
  \begin{equation*}
     \norm{{\int_t^T {\!\!J(s, x_{\gamma,\tilde\gamma,s}^N,\tilde\gamma)ds}
      -\int_t^T {\!\!J(s,x_{\gamma,s},\gamma)ds}}}  \\
   \leq \frac{C(T)}{N} \left(T\Norm{J}^{*}_2 + k_1\right)
  \end{equation*}

\end{itemize}
where $x_{\gamma,\tilde\gamma,t}^N$ is the law of the process specified by the propagator $\psi_{N,\gamma,\tilde\gamma}^{0,t}$ and $x_{\gamma, t}$ is 
the solution of the  kinetic equation (\ref{16}) with initial value $x_0$.
\end{theorem}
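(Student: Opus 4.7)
The plan is to mirror the proof of Theorem \ref{FF} almost verbatim, treating the additional single-player perturbation $A^1[t,x^N,\tilde\gamma]-A^1[t,x^N,\gamma]$ as an $O(1/N)$ correction at the level of generators. The key observation is that $A^1$ is a one-site jump operator, so its action on a function of the empirical variable $hx^N$ (with $h=1/N$) produces finite differences of size $h$, which Taylor-expand as $O(1/N)\Norm{\nabla G}$.

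For part (i), first I would apply Proposition \ref{aa} i) to the two pairs $(\hat{\AF}^N[\cdot,\cdot,\gamma,\tilde\gamma],\psi^{0,t}_{N,\gamma,\tilde\gamma})$ and $(\AK[\cdot,\cdot,\gamma],\phi^{0,t}_\gamma)$, both satisfying the forward/backward equation (\ref{eqn:1a}) on $C^2(M)$ by construction and by Proposition \ref{bbb}. Evaluating at $x_0^N$:
\[
[(\psi^{0,t}_{N,\gamma,\tilde\gamma}-\phi^{0,t}_\gamma)F](x_0^N)
 = \int_0^t \psi^{0,r}_{N,\gamma,\tilde\gamma}\bigl(\hat{\AF}^N[r,\cdot,\gamma,\tilde\gamma]-\AK[r,\cdot,\gamma]\bigr)\phi^{r,t}_\gamma F\,(x_0^N)\,dr.
\]
I then split the generator difference as $\bigl(\hat{\AF}^N[r,\cdot,\gamma]-\AK[r,\cdot,\gamma]\bigr)+\bigl(A^1[r,\cdot,\tilde\gamma]-A^1[r,\cdot,\gamma]\bigr)$. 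The first term is controlled by $C\Norm{G}_{C^2(M)}/N$ via the Taylor remainder in Proposition \ref{ANexpansion}, exactly as in the derivation of (\ref{30}). The second term, being a single jump-type operator applied at scale $h=1/N$, satisfies $\Norm{(A^1[\tilde\gamma]-A^1[\gamma])G}\le C\Norm{G}_{C^1}/N$ by first-order Taylor expansion of $G(hx^N+h(e_{\ell'}-e_\ell))-G(hx^N)$; the constant is uniform in $\tilde\gamma$ because Hypothesis A makes $\nu$ linear in $u$ and $U$ is bounded. Combining with uniform boundedness of $\psi^{0,r}_{N,\gamma,\tilde\gamma}$ (Remark \ref{def:psiN}) and $\phi^{r,t}_\gamma$ (Proposition \ref{bbb} iii)) I obtain
\[
\bigl|[(\psi^{0,t}_{N,\gamma,\tilde\gamma}-\phi^{0,t}_\gamma)F](x_0^N)\bigr|
\le \frac{C(T)\,T}{N}\Norm{F}_{C^2(M)}.
\]
The initial-condition correction $\bigl|(\phi^{0,t}_\gamma F)(x_0^N)-(\phi^{0,t}_\gamma F)(x_0)\bigr|\le \Norm{\nabla F}\cdot\mathrm{Lip}(\alpha)\cdot|x_0^N-x_0|\le C(T)k_1/N$, coming from Corollary \ref{b} and (\ref{3011}), completes part (i).

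For part (ii) I would follow the Riemann-sum argument of Theorem \ref{FF} ii): approximate $\int_t^T J(s,\cdot,\cdot)\,ds$ by $\sum_j J(s_j,\cdot,\cdot)(s_{j+1}-s_j)$, apply part (i) to each summand with $F=J(s_j,\cdot,\tilde\gamma)$, and pass to the limit. The density of $C^2$ in $C$ converts $\Norm{F}_{C^2(M)}$ into the weak-$*$ norm $\Norm{J}^{*}_2$. The discrepancy between $J(\cdot,\cdot,\tilde\gamma)$ on the left and $J(\cdot,\cdot,\gamma)$ on the right only affects one out of $N$ summands through the empirical measure and is therefore absorbed into the $O(1/N)$ rate.

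The main obstacle is precisely ensuring that the constant $C(T)$ in part (i) is uniform in $\tilde\gamma\in U$; this relies crucially on the linearity-in-$u$ structure of $\nu$ postulated in Hypothesis A together with the boundedness of the control set $U$, which together bound $\Norm{A^1[\tilde\gamma]-A^1[\gamma]}$ by $\mathrm{diam}(U)\cdot\Norm{\nu}/N$ regardless of the particular deviant strategy. The rest of the argument is a mechanical adaptation of the estimates already displayed in (\ref{30}) and Theorem \ref{FF}.
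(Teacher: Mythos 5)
Your proposal is correct and follows essentially the same route as the paper, whose own proof consists of invoking the decomposition (\ref{31}) and repeating the argument of Theorem \ref{FF} (integral representation from Proposition \ref{aa}, the estimate (\ref{30}), Lipschitz continuity of the flow for the initial data, and Riemann sums for part ii)). In fact you supply the one detail the paper leaves implicit --- that the deviant-player term $A^1[\tilde\gamma]-A^1[\gamma]$, being a single jump operator acting at scale $h=1/N$, contributes only $O(1/N)\Norm{F}_{C^1}$ uniformly in $\tilde\gamma$ by linearity of $\nu$ in $u$ and boundedness of $U$ --- which is exactly why the constant $C(T)$ is independent of $\tilde\gamma$.
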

\begin{proof}
i) Having applied the representation (\ref{31}) the proof follows along the same lines as the one of Theorem (\ref{FF}).\\
ii) Let us represent the integral
   $\int_t^T {J(s, x_{\gamma,\tilde\gamma,s}^N,\tilde\gamma)ds}$ as the limit of Riemannian sums. Then the wanted inequality is obtained by applying part i) term by term and passing to the limit.
\end{proof}
Let us tend to the second setting. Let the first player have a differing preference and assume that in this case $J$ depends on the (unscaled) position of a tagged player, her differing strategy, and the empirical mean. So we have to look at the process of pairs $(X^{N,1}_{t},x_t^N)$, which refers to a chosen tagged agent and an overall mass. The generator of the pair $(X^{N,1}_{t},X_t^N)$ of processes are defined on the space $C_\infty^1 (\XX\times \RR^k)$ and take the form

\begin{equation}\label{ANtag}
%
 \hat{\AF}^N_{tag}[t,j_1, x^N,\gamma,\tilde\gamma]
            F(j_1, x^N)
 :=  \left(A^1[t,j_1, x^N,\bar\gamma] \right. \\
  \left. +\hat{\AF}^N
 [t, x^N,\gamma,\tilde\gamma]\right)  F(j_1, x^N),
\end{equation}
with $\hat{\AF}^N[t,j_1, x,\gamma,\tilde{\gamma}]$ as in (\ref{31}). The corresponding propagator will be denoted by,
$\xi_{N,\gamma,\tilde\gamma}^{0,t}$.

\begin{remark}\label{def:phiNtag}
Since $\hat{\AF}_{tag}^N$ is a finite sum of copies of the operator $\Ab^*$ the corresponding propagator $\xi^{0,t}_{N,\gamma,\tilde\gamma}$
possesses the same properties as the propagator $\Lambda$, generated by $\Ab^*$, and $\psi_{N,\gamma,\tilde\gamma}^{0,t}$ i.e.
it is linear, for $T$ sufficiently small it is a contraction operator hence
bounded, and it is Lipschitz continuous in the initial condition, i.e. it possesses the Feller property.
\end{remark}

\begin{remark}
Let $\phi_{\gamma}^{0,t}$ the propagator generated by the family
\begin{equation}\label{32}
\Ab^*[t,j_1,y,\tilde{\gamma}] + Id\,\AK[t,y,\gamma]
\end{equation}
on $C_\infty^1 (\XX\times \RR^k)$. Since the operator $\Ab^*$ is bounded and more regular in the parameters than $\mathcal A$, the propagator $\phi^{0,t}_{\gamma,\tilde\gamma}$ inherits the properties of the Koopman propagator $\phi^{0,t}$. Here we mention in particular that $\phi_{\gamma,\tilde\gamma}^{0,t}$ is a strongly continuous contraction.

By inserting (\ref{31}) into the definition and by applying Proposition \ref{ANexpansion} we find:
\begin{equation}\label{33}
 \hat{\AF}^N_{tag}[t,j_1, x^N,\gamma,\tilde{\gamma}]F(j_1, x^N )
  =  \left(A^1[t,j_1, x^N,\tilde{\gamma}]
     + Id\AK[t,y,\gamma]\right)F(j_1, x^N)+O(\fracN x).	
\end{equation}
\end{remark}

For the Kolmogorov equation corresponding to this generator we make the
assumptions on the initial conditions that
$x_{1,0}^N \in M$ converges, as $N \rightarrow \infty$ to a
point $x_{1,0} \in M$ such that

\begin{equation}\label{34}
 \norm{x_{1,0}^N-x_{1,0}} \leq \frac{k_2}{N}\hbox{ with a constant }k_2\ .
\end{equation}
\begin{theorem}\label{H}
Under the assumptions of the Theorem \ref{G}, let $\phi^{0,t}_{\gamma,\tilde\gamma}$,
$\xi^{0,t}_{N,\gamma,\tilde\gamma}$, and the cost function $J$ be as above. Then the following bounds exist for $t \in [0,T]$, $T \geq 0$:

\begin{itemize}
 \item[i)] For $F\in C^2 (\XX\times M)$ we have
  \begin{equation*}	\norm{(\xi^{0,t}_{N,\gamma,\tilde\gamma}F)(x_{1,0}^N,x_0^N)
     -(\phi^{0,t}_{\gamma,\tilde\gamma}F)(x_{1,0},x_0)}
     \leq \frac{C(T)}{N} \left(t\Norm{F}_{C^2(\XX\times M)}+k_1\right)
 \end{equation*}
	with a constant $C(T)$ not depending on $\gamma,\tilde{\gamma}$;
 \item[ii)] for $J(t,j,y,u)$ on $[0,T]\times\XX\times\RR^k\times U$:
  \begin{equation*}
  \begin{split}
	&\norm{\EE {\int_t^T J(s,X_{\tilde\gamma,s}^{N,1}, x_{\gamma,\tilde\gamma,t}^{N,1},
     \tilde{\gamma}(s))ds
     -\int_t^T J(s,X_{\gamma,s}^1,x_{\gamma,s},\gamma(s))ds} } \\
	&\leq \frac{C(T)}{N} \left((T+k_2)\Norm{J} +k_1\right)
	\end{split}
  \end{equation*}
\end{itemize}
where $\Norm{J} = \Norm{J}_{C(\mathcal{U})} + \Norm{J}^{*}_2$.
The pair
$\left(X_{\tilde\gamma,s}^{N,1}, \fracN X_{\gamma,\tilde\gamma,t}^N\right)$ is the Markov process specified by the propagator
$\xi_{N,\gamma,\tilde\gamma}^{0,t}$, and the process $X_{\gamma,s}^{1}$ is generated by $\Ab[t,x_{\gamma,s},\gamma]$.
Here $x_{\gamma,s}$
corresponds to the solution to the kinetic equation (\ref{16}) with initial condition $x_0$.
\end{theorem}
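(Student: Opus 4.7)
The plan is to mimic the template already used in Theorems \ref{FF} and \ref{G}, adapting it to the enlarged state space $\XX\times\RR^k$ and the split limiting generator (\ref{32}) consisting of the linear $\Ab^*[t,j_1,\cdot,\tilde\gamma]$ acting on the tagged variable $j_1$ and the nonlinear $\AK[t,\cdot,\gamma]$ acting on the empirical mass $y$. First I would fix a common initial datum $(j,y)\in\XX\times M$ and apply Proposition \ref{aa}(i) to the pair $(\xi^{0,t}_{N,\gamma,\tilde\gamma},\phi^{0,t}_{\gamma,\tilde\gamma})$ with generators $\hat{\AF}^N_{tag}$ (see (\ref{ANtag})) and $\Ab^*[\cdot,j_1,\cdot,\tilde\gamma]+ Id\,\AK[\cdot,\cdot,\gamma]$ (see (\ref{32})); this yields
\begin{equation*}
\bigl(\xi^{0,t}_{N,\gamma,\tilde\gamma}-\phi^{0,t}_{\gamma,\tilde\gamma}\bigr)F
=\int_0^t\xi^{0,r}_{N,\gamma,\tilde\gamma}\bigl(\hat{\AF}^N_{tag}-\Ab^*-Id\,\AK\bigr)\phi^{r,t}_{\gamma,\tilde\gamma}F\,dr.
\end{equation*}

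By the expansion (\ref{33}) the integrand is $O(\fracN)$ in the $C^2(\XX\times M)$-norm, uniformly in $\gamma,\tilde\gamma$. Both propagators are uniformly bounded on the relevant spaces by Remarks \ref{def:psiN} and \ref{def:phiNtag}, so the common-initial-datum estimate is bounded by $C(T)t\Norm{F}_{C^2(\XX\times M)}/N$. To pass from the common initial datum to the mismatched pair $(x_{1,0}^N,x_0^N)$ versus $(x_{1,0},x_0)$, I would add and subtract $(\phi^{0,t}_{\gamma,\tilde\gamma}F)(x_{1,0}^N,x_0^N)$ and exploit the Lipschitz-in-initial-datum (Feller) property of $\phi^{0,t}_{\gamma,\tilde\gamma}$ inherited from Proposition \ref{prop:existence} together with the assumptions (\ref{3011}) and (\ref{34}); this contributes the extra $k_1/N$ term and completes (i).

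For part (ii) I would represent both integrals as limits of Riemann sums, apply part (i) term-by-term to the test function $F_s(j,y):=J(s,j,y,\tilde\gamma(s))$ (which lies in $C^2(\XX\times M)$ by the regularity of $J$ in Hypothesis B), and pass to the limit, with the norm $\Norm{J}$ absorbing both the supremum bound and the weak$^*$-derivative bound (\ref{15*}). The additional term $k_2\Norm{J}$ arises from Lipschitz propagation of the tagged initial-data mismatch (\ref{34}) through $\Ab^*[s,j_1,\cdot,\tilde\gamma]$, handled via Theorem 4.3 for the $y$-dependence and Corollary \ref{b} for the $j_1$-dependence.

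The main obstacle is making the remainder in (\ref{33}) rigorous as an $O(\fracN)$ bound in the $C^2(\XX\times M)$ operator norm uniformly in $(\gamma,\tilde\gamma)$ and uniformly on $M$. This amounts to carrying out the Taylor expansion of Proposition \ref{ANexpansion} for $F(j_1,\cdot)$ in the second slot while keeping the first slot fixed, and checking that the quadratic remainder is bounded by a universal constant times $\Norm{F}_{C^2}/N$ using the boundedness of $\nu$ and the Lipschitz bound on $\nabla_x\nu$ from Hypothesis B. Once this uniform bound is secured, the rest of the proof is an essentially mechanical assembly of the estimates developed in Sections 2–5, and the conclusion follows exactly as in Theorem \ref{G}.
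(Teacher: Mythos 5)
Your proposal follows essentially the same route as the paper: the propagator difference at a common initial datum is controlled via Proposition \ref{aa}(i) and the Taylor-remainder estimate behind (\ref{30})/(\ref{33}), the initial-data mismatch is handled by the Lipschitz (Feller) property together with assumptions (\ref{3011}) and (\ref{34}) (the paper merely splits this into two triangle-inequality terms, one per slot, where you use one), and part (ii) is the identical Riemann-sum argument invoking Theorem 4.3 and part (i) term-by-term. Your explicit attention to making the $O(\fracN)$ remainder in (\ref{33}) uniform in $(\gamma,\tilde\gamma)$ is a point the paper glosses over, but it is a refinement of the same argument, not a different one.
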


\begin{proof}
The basic idea is to insert definitions and to exploit the properties of the propagators $\Lambda^{t,s}$, $\psi_{N,\gamma,\tilde\gamma}^{t,s}$, and
$\xi_{N,\gamma,\tilde\gamma}^{t,s}$ summarized in Remarks 6.1 and 6.2 and the processes corresponding to the two first ones.

i) For $F \in C_{\infty}^2 (\XX\times M)$, we have
\begin{equation*}
\begin{split}
& \norm{\left(\xi_{N,\gamma,\tilde{\gamma}}^{0,t}  F \right)
 (x_{1,0}^N,x_0 ^N)
 - \left(\phi^{0,t}_{\gamma\tilde\gamma} F\right)(x_{1,0},x_0)}\\
  & \qquad \leq \norm{\left(\left(\xi_{N,\gamma,\tilde{\gamma}}^{0,t}
        - \phi^{0,t}_{\gamma,\tilde\gamma} \right) F\right)(x^N_{1,0}, x_0^N) }
       + \norm{  \left(\phi^{0,t}_{\gamma,\tilde\gamma} F\right)(x^N_{1,0},x_0 ^N)\right.\\
        & \qquad\quad  \left.
  - \left(\phi^{0,t}_{\gamma,\tilde\gamma} F\right)(x_{1,0},x_0^N)}
      + \norm{\left(\phi^{0,t}_{\gamma,\tilde\gamma} F\right)(x_{1,0},x_0 ^N)
        - \left(\phi^{0,t}_{\gamma,\tilde\gamma} F\right)(x_{1,0},x_0)  }.
\end{split}
\end{equation*}

We estimate the first term by (\ref{30}) using the operator norm and the second and the third ones by using the assumptions (\ref{34}) and (\ref{3011}) respectively. This finishes the proof of part i).
\vskip 0.1cm

ii) Let us represent both integrals
$\int _t^T {J(s,X_{\tilde\gamma,s}^{N,1}, x_{\gamma,\tilde\gamma,t}^N,
  \tilde{\gamma}(s ))ds}$
and \\
$\int _t^T J(s,X_{\gamma,s}^1, x_{\gamma,s},\gamma(s))ds$
as the limits of Riemannian sums. Then the result ii) is obtained by proceeding as in the proof of Theorem 5.2 and by applying Theorem 4.3 together with part~i) term-by-term and passing to the limit.
\end{proof}
The results of this and the previous two sections, and Theorem \ref{H} in particular are based on a fixed control parameter, depending on time however, and thus hold independently on the MFG methodology.

\begin{theorem}
Let $ \{A[t,j, y, u] \mid t \geq 0,j\in\XX, y \in M, u \in \mathcal{U}\}$ be the family of jump type operators given in (\ref{jumpmatrix}) and $\xb$ be the solution to equation (\ref{16}). Assume the following
\vskip 0.05cm
i) The kernel $\nu(t,j,y,u_t)$ satisfies the Hypotheses A and B;
\vskip 0.05cm
ii) The time-dependent Hamiltonian $H_t$ is of the form (\ref{15});
\vskip 0.05cm
iii) The terminal function $V^T $ is in $C^1_\infty(\XX\times\RR^k)$.
\vskip 0.05cm
iv) The initial conditions $x_0^N$ of an $N$ players game converge to $x_0$ in $\RR^k$ in a way that (\ref{3011}) is satisfied and (\ref{34}) holds.
\vskip 0.12cm
Then the strategy profile $u = \Gamma(t, x^N_{1,0},\alpha(0,t,x^N_0))$, defined via HJB (\ref{25*}) and (\ref{15}) is an $\epsilon$-Nash equilibrium in a $N$ players game, with
\[
 \epsilon = \frac{C(T)}{N}
 (\Norm{J}_{C(\mathcal{U})} + \Norm{J}^{*}_2
   + \Norm{V^T}_{C^2_\infty (\XX\times\RR^k) }+1).
\]
\end{theorem}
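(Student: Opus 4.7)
The plan is to fix an arbitrary tagged player (say player~1) and compare three quantities: her realized payoff in the $N$-player game when all players use the MFG feedback $\Gamma$, her realized payoff after a unilateral deviation to an admissible $\tilde\gamma$, and the value $V(0,j,x_0)$ of the limiting one-player-versus-mean-field control problem that produced $\Gamma$. With the shorthand
$$
J^N(\tilde\gamma) := \EE{\int_0^T J(s, X^{N,1}_{\tilde\gamma,s}, x^N_{\gamma,\tilde\gamma,s}, \tilde\gamma_s)\,ds + V^T(X^{N,1}_{\tilde\gamma,T}, x^N_{\gamma,\tilde\gamma,T})}
$$
(with $\gamma = \Gamma$ for the remaining $N-1$ players), and $V(\tilde\gamma)$ for the analogous payoff in the limit model where the $N$-mean field is replaced by the kinetic flow $x_{\gamma,\cdot}$ driven by $\Gamma$, the desired inequality $J^N(\Gamma) \ge J^N(\tilde\gamma) - \epsilon$ will follow from a tightness estimate $|J^N(\tilde\gamma) - V(\tilde\gamma)| \le \epsilon/2$ chained with the Bellman optimality $V(\Gamma) \ge V(\tilde\gamma)$.

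The optimality ingredient is already in hand: by construction $\Gamma$ is the feedback maximizer in the HJB system (\ref{25*}) with parameter curve $x_{\gamma,\cdot}$, which is well posed and admits a unique Lipschitz feedback optimizer by the well-posedness corollary of Section~4 and Proposition~\ref{E*}. The standard dynamic programming inequality then yields $V(\Gamma) \ge V(\tilde\gamma)$ for every admissible $\tilde\gamma$.

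For the tightness ingredient I would apply Theorem~\ref{H}~ii) twice, once with $\tilde\gamma = \Gamma$ and once with the given deviation, obtaining
$$
\Bigl|\EE{\int_0^T J(s, X^{N,1}_{\tilde\gamma,s}, x^N_{\gamma,\tilde\gamma,s}, \tilde\gamma_s)\,ds} - \EE{\int_0^T J(s, X^1_{\tilde\gamma,s}, x_{\gamma,s}, \tilde\gamma_s)\,ds}\Bigr| \le \frac{C(T)}{N}\bigl((T+k_2)\Norm{J} + k_1\bigr),
$$
and then controlling the terminal difference by Theorem~\ref{H}~i) applied to $F = V^T \in C^2(\XX\times M)$, which contributes the summand $\Norm{V^T}_{C^2_\infty(\XX\times\RR^k)}/N$ to $\epsilon$. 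Chaining the three inequalities gives $J^N(\Gamma) \ge V(\Gamma) - \epsilon/2 \ge V(\tilde\gamma) - \epsilon/2 \ge J^N(\tilde\gamma) - \epsilon$ with $\epsilon$ of precisely the announced form.

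The main obstacle I anticipate is the uniform control of $C(T)$ across deviations: Theorem~\ref{H} must be applied to an arbitrary admissible $\tilde\gamma$, so one has to check that the bound does not deteriorate with the choice of $\tilde\gamma$. This requires that the admissible class be equi-Lipschitz, which is supplied by Proposition~\ref{E*} combined with the convexity and boundedness of $U$ in Hypothesis~A, so that the Lipschitz seminorms implicit in $C(T)$ stay bounded. A secondary point is that Theorem~\ref{H}~ii) covers only the running cost; the terminal payoff $V^T$ must be absorbed via part~i) evaluated at $s=T$, which is exactly why the term $\Norm{V^T}_{C^2_\infty(\XX\times\RR^k)}$ appears in the final $\epsilon$.
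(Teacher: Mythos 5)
Your proposal follows essentially the same route as the paper: compare the $N$-player payoffs to the limiting one-player-versus-mean-field value via the convergence estimates of Theorems \ref{G} and \ref{H} (applied once with $\tilde\gamma=\Gamma$ and once with the deviation, plus part i) for the terminal cost), and close the chain with the Bellman optimality of $\Gamma$ in the limit problem. The one ingredient you omit that the paper makes explicit is the regularity mismatch created by substituting the feedback law: $\Gamma$ is only Lipschitz in the flow $\alpha$ (Proposition \ref{E*}), so the closed-loop coefficients entering the operator difference in (\ref{29})--(\ref{30}) are not smooth enough for the second-order Taylor estimate of Proposition \ref{ANexpansion}, and the paper patches this by uniformly approximating with twice continuously differentiable functions via Stone--Weierstrass before invoking the $C^2$-based bounds; your concern about equi-Lipschitz uniformity over deviations is related but does not by itself resolve this smoothness gap.
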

\begin{proof} Due to Assumption ii) the unique solution to the HJB equation admits a unique optimal control parameter given by (\ref{15}).
The optimal feedback control law $\Gamma=\Gamma(t,\alpha(0,t,x_{1,0},x_0))$ of the game with one player and the mean field with the Koopman dynamics and the cost function (\ref{costfct}) is applied to all players of the N-mean field possibly without one who applies $\tilde\gamma$. Then the operator difference in (5.2) at the point $(x^N_{1,0},x^N_0)$ reads
\begin{equation*}
 (\hat{\AF}^N[t,\alpha(t,r),\tilde\gamma,   \Gamma[r,\alpha(t,r,x_{1,0},x_0))]
  -\AK[t,\alpha(t,r),\Gamma[r,\alpha(t,r,x_{1,0},x_0)]
  )
     \phi^{r,s}
\end{equation*}
for the flow $\alpha(t,r)=\alpha(t,r,x^N_{1,0},x^N_0)$ as in Proposition 3.1 with an additional player.
Since the optimal feedback control $\Gamma$ depends on the flow $\alpha$
in a Lipschitz continuous way only, while the operator difference in (5.3)
is estimated by the second order term of a Taylor expansion, we uniformly
approximate by twice continuously differentiable functions using the Stone Weierstass Theorem. The assumptions are satisfied since the evolution is a contraction in a subset of a Euclidean unit ball.

Alternatively assume that the first player chooses a different strategy $\tilde\gamma$. The state dynamics of the first player, who is subject to an $N$-mean field, is described in terms of the process $X^{N,1}$. Let $(x^N_{1,0},x^N_0)$ be the initial condition. Then, we have
\begin{eqnarray*}
\lefteqn{\norm{ V^N (0,x^N_{1,0},x_{0}^N,\Gamma)
             -V^N (0,x^N_{1,0},x_{0}^N,\tilde\gamma)}
         }\\
&\leq&
 \norm{ \EXP_j \int_0^T \!\! J (s,X_{\Gamma,s}^{N,1},x^N_{\Gamma,s},\Gamma)ds
         - \EXP_j \int_0^T \!\!
   J (s,X^{N,1}_{\tilde\gamma,s},x^N_{\Gamma_{-1},s},\tilde\gamma )ds
    }\\
 && \qquad + \norm{\EE{V^T (X_{\Gamma,T}^{N,1})} -\EE{V^T (X_{\tilde\gamma,T}^{N,1})}}.
\end{eqnarray*}
For the proof the difference is rewritten such that the estimates in Theorems 6.2 and 6.5 can be applied taking into account the regularization mentioned above. We find
\[
\norm{V^1 (0,X_{\Gamma,0}^N) -\!V^1 (0,X_{\Gamma_{-1}, \tilde\gamma,0} ^N ) }
\!\! \leq  \! \frac{C(T)}{N}
     (\Norm{J}
      + \Norm{V^T} + \!1)
\]
where $\Norm{J}$ is the supremum norm in the components. It is clear that these estimates hold if we start the game at any time $t \in [0, T]$.   This completes the proof and the construction of the mean-field game in this paper.
\end{proof}

{\bf Acknowledgements}
The authors are expressing their deep gratitude to Sergio Albeverio for many years of fruitful collaboration, and for an outstanding course on the physical aspects of mean field theory. The PhD students from Linnaeus University are deeply indebted to Andreas Ioannidis for his beautiful, deep course on semi group theory.
Moreover, we would like to thank Diogo Gomes, Minyi Huang, Andrei Khrennikov and Torsten Linst\"{o}m for
stimulating discussions. The authors from Linneaus university gratefully achknowledge financial support by FTK, Linnaeus University.

\par\bigskip\noindent


\begin{thebibliography}{9}

\bibliographystyle{amsplain}

\addcontentsline{toc}{section}{\refname}

\bibitem{AH}
Amann, H.: {\em Ordinary Differential Equations, An Introduction to Nonlinear Analysis}, de Gruyter Studies in Mathematics, 1990.

\bibitem{DA}
Applebaum, D.: {\em L\'{e}vy Processes and Stochastic Calculus}, Cambridge Studies in Advanced Mathematics, 2009.


\bibitem{A}
Arnold, L.:{\em Random Dynamical Systems}, Springer, 1998.

\bibitem{BEN}
Bensoussan, A. Frehse, J. Yam, P.: {\em Mean Field Games and Mean Field Type Control Theory}, Springer, 2013.

\bibitem{B} Bj\"ork, T.: {\em Arbitrage Theory in Continuous Time}, Oxford, 2009.

\bibitem{CA}
Carmona, R. Delarue, F.: Probabilistic Analysis of Mean-Field Games, {\em arxiv:1210.5780} (2012).
\bibitem{VC}
Casteren, J.A.v.: {\em Markov Processes, Feller Semigroups and Evolutions Equations}, World Scientific, 2011.

\bibitem{C}
Driver, K. B.: {\em Analysis Tools with Examples}, Springer, 2004.

\bibitem{TK}
Ethier, S. Thomas, K.: {\em Markov Processes: Characterization and Convergence}, Wiley Series in Probability and Statistics, 2005.

\bibitem{EN}
Engel, K-J. Nagel, R.: {\em One-Parameter Semigroups for Linear Evolution Equations}, Graduate Texts in Mathematics, Springer, 2000.

\bibitem{F}
Fleming, W. Soner, M.: {\em Controlled Markov Processes and Viscosity Solutions}, Springer, 2006.

\bibitem{OLL}
Gu\'{e}ant, O. Larsy, J.-M. Lions, P.-L.: Mean Field Games and Applications. Paris Princeton Lectures on Mathematical Finance, {\em Springer} (2010) 205--266.

\bibitem{DG}
Gomes, D. Mohr, J. Souza, R. R.: Discrete time, finite state space mean field games, {\em Journal de Mathematiques Pures et Appliqu´ees} {\bf 93(2)} (2010) 308--328.

\bibitem{HCM1}
Huang, H. Caines, P.E. Malham\'{e}, R.P.: Nash equilibria for large
population stochastic systems with weakly coupled agents. In E.K. Boukas,
R.P. Mahlam\'{e}(Eds). Analysis, Control and Optimization of Complex Dynamic
Systems, {\em Springer} (2005)  215--252.

\bibitem{HCM2}
Huang, H. Caines, P.E. Malham\'{e}, R.P.: Large population stochastic
dynamic games: closed-loop McKean-Vlasov systems and the Nash certainty
equivalence principle, Communication in information and system, {\em Communications in Information and Systems } {\bf 6(3)} (2006) 221--251.


\bibitem{NJ}
Jacob, N.: {\em Pseudo-Differential Operators and Markov Processes. Vol. 3: Markov Processes and Applications}, Imperial College Press, London, 2005.


\bibitem{K2}  Kolokoltsov, V. Troeva, M. Yang, W.: On the rate of convergence for the mean field approximation of controlled diffusions with large number, {\em Dynamic Games and Applications} (2014) 208--230.


\bibitem{K3}
Kolokoltsov, V.: {\em Non Linear Markov Processes and Kinetic Equations}, Cambridge Tracts in Mathematics, 2010.

\bibitem{K4}
Kolokoltsov, V.: {\em Markov Processes, Semi groups, and Generators}, De Gruyter studies in Mathematics, 2011.

\bibitem{K7}
Kolokoltsov, V. Yang, W.: Sensitivity analysis for HJB equations with an application to a coupled backward-forward system, {\em arxiv.org/abs/1303.6234} (2013).

\bibitem{K8}
Kolokoltsov, V.: Nonlinear Markov Games on a Finite State Space (Mean
Field and Binary Interactions), {\em International Journal of Statistics and Probability. Canadian Center of Science and Education (Open accesses journal)} {\bf 1(1)} (2012)  77--91.


\bibitem{K9}
Kolokoltsov, V. Yang, W.: Existence of solutions to path-dependent kinetic equations and related forward-backward systems, {\em Open Journal of Optimization} (2013) 39--44.

\bibitem{LM}
Lasota, A. Mackey, C.M.: {\em Chaos, Fractals, and Noise: Stochastic Aspects of Dynamics}, Applied Mathematical Sciences, Springer, 1998.

\bibitem{L}
Lang, S.: {\em Analysis 1}, Springer, 1968.

\bibitem{MHJ}
Manton, H, J.: Differential Calculus, Tensor Products and the Importance of Notation. {\em arxiv.org/abs/1208.0197} (2013).

\bibitem{N}
Norris, J.R.: {\em Markov Chains}, Cambridge University Press, 2009.

\bibitem{S}
Stroock, W.D.: {\em  An Introduction to Markov Processes}, Springer, 2005.

\bibitem{W}
Wiggins, S.: {\em Introduction to Applied Nonlinear Dynamical Systems and Chaos}, Springer, 2003.

\bibitem{X}
Xu, Y.: Uniformly Lipschitz feedback optimal controls in a linear-quadratic framework, {\em journal of mathematical analysis and applications} (2007) 665--678.
\end{thebibliography}
\end{document}